\newtheorem{theorem}{Theorem}
\newtheorem{lemma}{Lemma}
\newtheorem{proposition}{Proposition}
\newtheorem{definition}{Definition}
\newtheorem{question}{Question}
\theoremstyle{remark}
\newtheorem{remark}{Remark}
\newtheorem{corollary}{Corollary}
\newtheorem{example}{Example}
\newtheorem{hypothesis}{Hypothesis}
\def\beqlb{\begin{eqnarray}}\def\eeqlb{\end{eqnarray}}
\def\beqnn{\begin{eqnarray*}}\def\eeqnn{\end{eqnarray*}}
\def\N{\mathbb{N}}
\def\R{\mathbb{R}}
\def\P{\mathbb{P}}
\def\E{\mathbb{E}}
\def\TT{\mathcal{T}}
\renewcommand{\Phi}{\varPhi}
\renewcommand{\epsilon}{\varepsilon}
\newcommand{\var}{\operatorname{var}}
\renewcommand{\d}{\text{\rm\,d}}
\definecolor{mygray}{gray}{0.9}
\definecolor{deeppink}{RGB}{255,20,147}
\definecolor{mygreen}{rgb}{0.05, 0.576, 0.03}
\definecolor{myred}{rgb}{0.768, 0.09, 0.09}
\long\def\symbolfootnote[#1]#2{\begingroup
	\def\thefootnote{\fnsymbol{footnote}}\footnote[#1]{#2}\endgroup}
\newcommand{\Ind}[1]{\mathbf{1}_{\left\{#1\right\}}}
\newcommand{\x}[1]{[x^{#1}]}
\renewcommand{\d}{{\mathrm{d}}}
\renewcommand{\L}{\mcl{L}}
\newcommand{\mcl}{\mathcal}
\newcommand{\Ll}{\left}
\newcommand{\Rr}{\right}
\newcommand{\bracket}[1]{\left\langle{#1}\right\rangle}
\newcommand{\norm}[1]{\left\Vert{#1}\right\Vert}
\newcommand{\e}{\mathbf e}
\newcommand{\G}{\mcl G}
\begin{document}
	
	\title{\bf Size distribution of clusters in site-percolation on random recursive tree}
	\author{Chenlin Gu\thanks{Yau Mathematical Sciences Center, Tsinghua University}, \, Linglong Yuan\thanks{Department of Mathematical Sciences, University of Liverpool}}

	\maketitle
	\begin{abstract}
		We prove rigorously several results about the site-percolation on random recursive trees, observed in the previous work by Kalay and Ben-Naim (\textit{J.Phys.A}, 2015). For a random recursive tree of size $n$, let every site have probability ${p \in (0,1)}$ to remain and with probability $(1-p)$ to be removed. As $n\to\infty,$ we show that the proportion of the remaining clusters of size $k$ is of order $k^{-1-\frac{1}{p}}$, resulting in a Yule--Simon distribution; the largest cluster size is of order $n^{p}$, and admits a non-trivial scaling limit. The proofs are based on the embedding of this model in the multi-type branching processes, and a coupling with the bond-percolation on random recursive trees.
	\end{abstract}
	\vspace{8pt} \noindent {\textbf{Keywords:} branching process, random recursive tree, power law, critical behaviour, Yule--Simon distribution, site-percolation, bond-percolation, Ewens sampling formula.}
	
	\noindent\textit{MSC (2020): 60J27, 60J85, 60J80} 
	
	\begin{figure}[h!]
		\centering
		\includegraphics[width=0.8\textwidth]{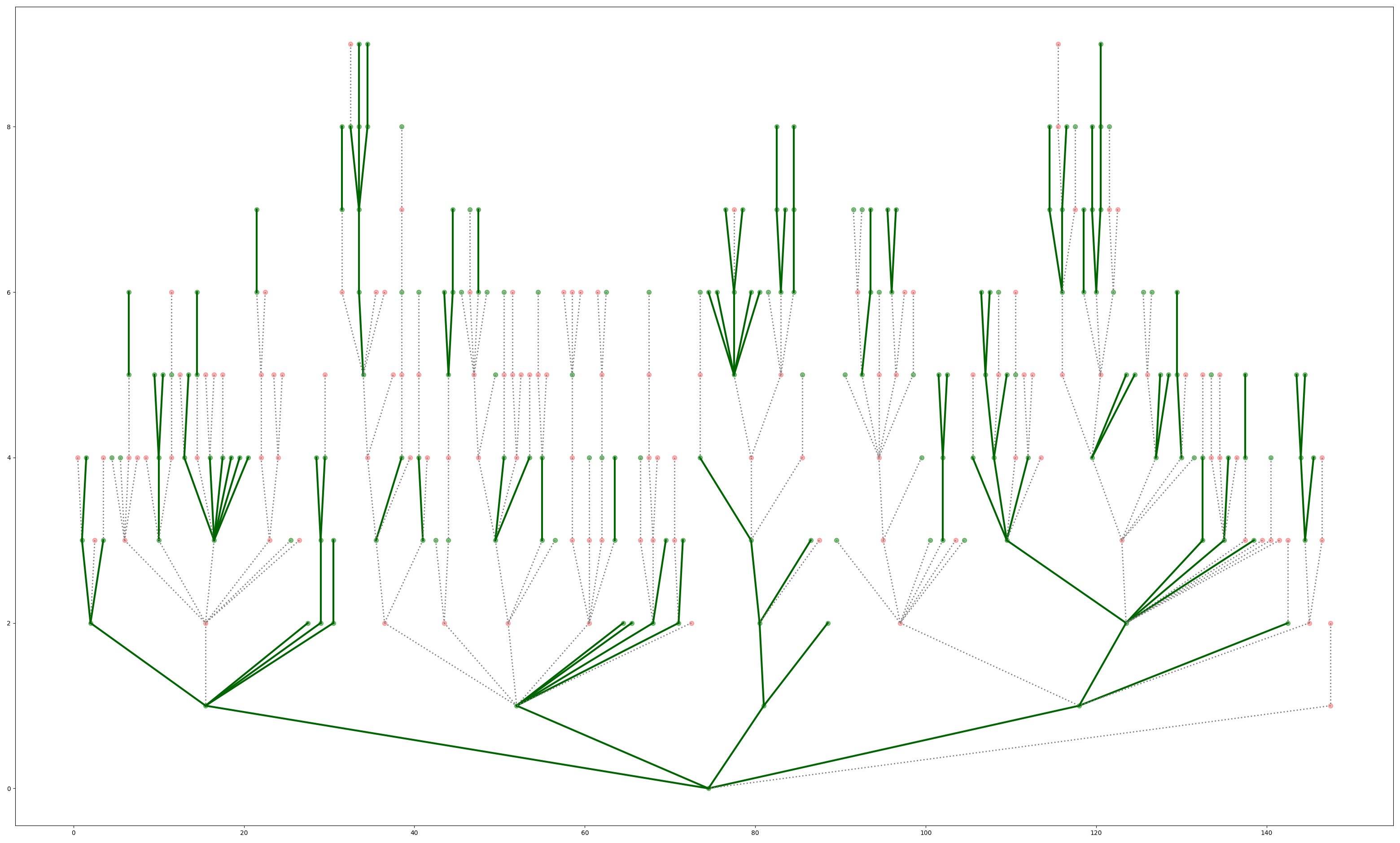}
		\caption{An illustration of the site-percolation on an RRT of size $300, p=0.6$. The sites in green are open, and the sites in red are closed.}\label{fig.RRT} 
	\end{figure}
	
	\newpage

	\section{Introduction}\label{sec:Introduction}
	\subsection{Motivation}
	The \textbf{(uniform) random recursive tree} (RRT) is a classical random object that has been studied extensively. It can be constructed by adding vertices one by one, such that every vertex chooses uniformly an existing vertex to attach. Now, for an RRT of size $n$, we implement the site-percolation in the sense that every vertex independently has probability $p \in (0,1)$ to be \textbf{open} and probability $(1-p)$ to be \textbf{closed}. 
	We remove the closed vertices and its directly attached edges, which breaks the tree into several connected components called \textbf{clusters}. See Figures~\ref{fig.RRT} and ~\ref{fig.RRT_site} for an illustration. We can verify that all clusters are independents RRTs conditioned on their sizes 
	(see Definition~\ref{def.ContinuousFrag} and Lemma~\ref{lem.RRTiid}), thus the cluster size distribution captures the key information and constitutes the topic that we will study in the present paper. 
	Especially, we are interested in two fundamental questions as $n \to \infty$:
	\begin{enumerate}
		\item What is the proportion of the clusters of size $k$ ?
		\item What is the typical size of the largest clusters ?
	\end{enumerate} 
	
	These two questions were 
	investigated first in \cite{kalay2014fragmentation} by  \footnote[1]{The model in \cite{kalay2014fragmentation} is slightly different from our description above, as their removal is implemented for $\lfloor pn \rfloor$ uniformly chosen vertices among an RRT of size $n$. However, the two models share the same large-scale behaviour, and we take the ``i.i.d." setting as a convenient option. 
	} Kalay and Ben-Naim, and they obtained the answers using heuristic arguments and Monte--Carlo simulation. 
	For the first question, they derived heuristically an infinite-dimensional differential system for the evolution of the proportion, and then observed an explicit solution $(\frac{1}{p} - 1) \frac{\Gamma(1 + \frac{1}{p})\Gamma(k)}{\Gamma(k+1+\frac{1}{p})}$ as the proportion of the clusters of size $k$ (see \cite[(15)]{kalay2014fragmentation}), which is approximately $k^{-1-\frac{1}{p}}$ for large $k$. It was not mentioned in the paper, but this explicit solution follows the Yule--Simon distribution with parameter $\frac{1}{p}$. However, the numerical approximation of this limit proportion is not stable for large $k$ (see \cite[Figure~4]{kalay2014fragmentation}), which leads to the second question of determining the typical size of the largest clusters, as raised by Kalay and Ben-Naim. 
	They claimed that the largest cluster size is of order $n^{p}$ (\cite[(20)]{kalay2014fragmentation}), as a result of applying the above obtained proportion formula and a heuristic argument of an extremal statistics criterion (see \eqref{eqn:heuristic}). 
	This paper aims to provide mathematical arguments behind the statements for the two questions by 
	Kalay and Ben-Naim, and justifies rigorously more precise versions of these results.

	Let us mention some other related previous work. There are numerous results in the literature on cutting/removing edges. In \cite{meir1974cutting}, Meir and Moon were interested in how many steps are needed to isolate a vertex when deleting edges uniformly one after another in an RRT. See \cite{drmota2009limiting, iksanov2007probabilistic, panholzer2004destruction} for asymptotic results on the number of cuts to isolate the root.   In  the bond-percolation model where every edge is removed independently with probability $(1-p)$, 
	Bauer and Bertoin studied in \cite{bertoin2014sizes} the sizes of the largest clusters in the supercritical regime, 
	where the giant clusters emerge since the parameter of percolation $p(n)$, as a function of $n$, satisfies $p(n) \xrightarrow{n \to \infty}1$.  The case $p(n) \equiv p \in (0,1)$ is considered as the critical percolation on RRTs, and the size of clusters are discussed by  Bauer and Bertoin in \cite{baur2015fragmentation}. Recently, to model the pandemic of Covid-19, several researches including \cite{bertoin2022} by Bertoin, \cite{bansaye2021growth} by Bansaye and the authors of this paper, and \cite{bellin2023uniform, bellin2024uniformattachmentfreezingscaling} by Bellin, Blanc-Renaudie, Kammerer,  and Kortchemski were devoted to studying the behaviour of the size distribution in breaking and growing RRTs. The bond-percolation on RRT also has applications in elephant random walk and see \cite{kursten2016, businger2018, guerin2023fixed, qin2024}.

	\begin{figure}[t]
		\centering
		\includegraphics[width=\textwidth]{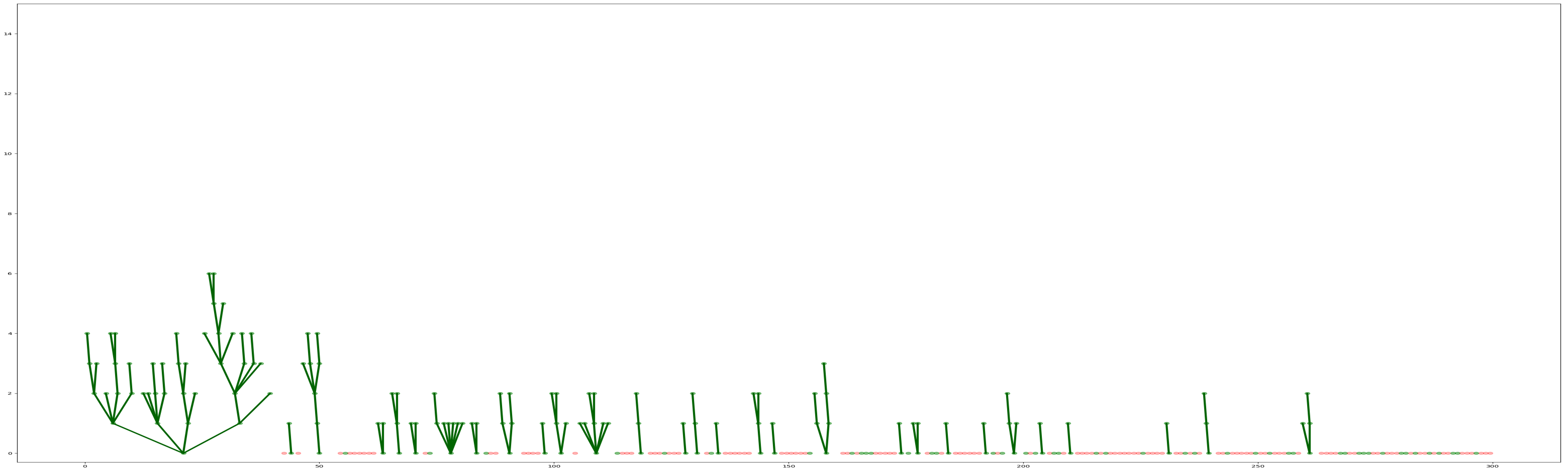}
		\caption{A decomposition of RRT in Figure~\ref{fig.RRT} into clusters.}\label{fig.RRT_site}
	\end{figure}

	In terms of cutting/removing vertices, recently  \cite{eslava2024degree} introduced a degree-biased version of cutting a vertex such that the subtree rooted at the parent vertex of a uniformly chosen edge is removed, and the number of cuts is studied to isolate the root.
	Apart from \cite{kalay2014fragmentation} by Kalay and Ben-Naim, we are not aware of any other work about the site-percolation on RRT to our best knowledge. This is partly because the site-percolation shares similar universal behaviour like the bond-percolation, while the former is a bit more complicated in nature. To see this, notice that every removal of an edge in an RRT always yields two clusters, but 
	removing a vertex can yield a random number of clusters.
	But it is known that 
	there exists a natural link between the bond-percolation and site-percolation on RRTs. 
	The other motivation of the paper is thus to explore this link fully and use it to facilitate the study of site-percolation.

	
	\subsection{Main results}\label{subsec.main}
	Given a graph $G$, we denote by $V(G)$ and $E(G)$ respectively for its vertex/site set and edge/bond set. A tree is a connected graph $T$ satisfying $\vert E(T) \vert = \vert V(T)\vert - 1$, so there exists a unique geodesic path to connect every two vertices on it. When every vertex of the tree is assigned a different label, we call it a labelled tree and identify the label set as the vertex set. In this paper, the label takes values in $\N_+ = \{1,2,3,4 \cdots\}$, then the vertex with the smallest label is called \textbf{the root}. We study especially \textbf{the recursive tree}, which is a labelled tree, such that the labels are increasing along the geodesic path from the root to every other vertex. From the construction of RRT, it is not hard to see that RRT is uniformly distributed on the set of recursive trees of fixed size. Let $p \in (0,1)$ be fixed throughout the paper. In order to couple all RRTs of different sizes and their site-percolations in a natural common space, we propose \textbf{the canonical coupling} following the convention in \cite{baur2015fragmentation}.
	\begin{definition}[Canonical coupling]\label{def.Canonical}
		We label the vertices by $\N_+ = \{1,2,3,\cdots\}$ as their order and implement the recursive construction introduced above to obtain an infinite random recursive tree $T_\infty = (\N_+, E(T_\infty))$. Let  $\omega: \N_+ \to \{0,1\}$ follow the Bernoulli product measure $\operatorname{Ber}(p)^{\otimes \N_+}$ to stand for the state of every vertex, where $1$ refers to the vertex being open and $0$ to the vertex being closed. Then we denote by $T_\infty^\omega = (\N_+, E(T_\infty^\omega))$ as the site-percolation on $T_\infty$
		\begin{align}\label{eq.percolation}
			E(T_\infty^\omega) := \{\{u,v\} \in E(T_\infty): \omega(u)=\omega(v) = 1\},
		\end{align}
		i.e. the edges only remain between open sites. We denote by $T_n$ and $T_n^\omega$ as their restriction on the vertex set $[n]: = \{1,2,\cdots,n\}$, which is a realisation of RRT and its site-percolation of size $n$.
		
		The site-percolation $T_\infty^\omega$ then gives the connected components called \textbf{clusters}. They form a partition $\Pi = (\Pi_i)_{i \in \N_+}$ of $\N_+$, i.e. 
		\begin{align}\label{eq.defBlock}
			\N_+ = \bigsqcup_{i=1}^\infty \Pi_i.
		\end{align} 
		Here $\bigsqcup$ stands for the disjoint union, and $\Pi_i$ is the the vertex set of a cluster which has the $i$-th smallest root. We also define $\Pi^{(n)}_{i} := \Pi_i \cap [n]$ as the restriction of $\Pi_i$ on $T_n^\omega$.
	\end{definition}
	We denote by $(\Omega, \mcl F, \P)$ the canonical coupling throughout the paper, and use $L^{q}$ as a shorthand for the Banach space $L^q(\Omega, \mcl F, \P)$ with $q \in [1, \infty]$.

	Our first result proves the convergence of the proportion of cluster sizes. Let $X_n := (X_n(0), X_n(1), X_n(2), X_n(3), \cdots)$ be the vector to stand for the numbers of clusters of different sizes in site-percolation, i.e. 
	\begin{align}\label{eq.defX}
		\forall k \geq 1, \quad X_n(k) := \#\Ll\{i \in \N_+: \vert \Pi^{(n)}_{i} \vert = k, \text{ and } \forall v \in \Pi^{(n)}_{i}, \omega(v)=1 \Rr\}.
	\end{align}
	Notice that $X_n(1)$ only counts the number of singletons of open vertices, so we define $X_n(0)$ as that of closed vertices
	\begin{equation}\label{eq.defX0}
		\begin{split}
			X_n(0) &:= \#\Ll\{i \in \N_+: \vert \Pi^{(n)}_{i} \vert = 1, \text{ and } \forall v \in \Pi^{(n)}_{i}, \omega(v)=0 \Rr\} \\
			&=\#\Ll\{v \in [n]: \omega(v) = 0\Rr\}.
		\end{split}
	\end{equation} 
	Moreover, $(X_n)_{n\in\N_+}$ is a Markov process, thanks to the canonical coupling in Definition \ref{def.Canonical}. Let $\e_k$ be the unit vector on the $k$-th coordinate, 
	in the canonical coupling space $(\Omega, \mcl F, \P)$, we have: 
	\begin{equation}\label{eq.Transition1}
		\begin{split}
			\P\Ll[X_{n+1} = X_n + \e_0 \vert  X_n\Rr] & = 1 - p, \\   
			\P\Ll[ X_{n+1} = X_n + \e_1 \vert X_n\Rr] & = \frac{p X_n(0)}{n}, \\    
			\P\Ll[ X_{n+1} = X_n - \e_k + \e_{k+1} \vert  X_n\Rr] & = \frac{p k  X_n(k)}{n}, \qquad k \geq 1.   
		\end{split}
	\end{equation}
	We can interpret each coordinate as one type of population, and treat the evolution as an infinite-type branching process.

	Given two vectors $\nu, f : \N \to \R$,  we define the following inner product 
	\begin{align}\label{eq.defTestf}
		\bracket{\nu, f} := \sum_{k = 0}^\infty \nu(k) f(k),
	\end{align} 
	and use it to measure the convergence of $\frac{X_n}{n}$ with a reasonable test function $f$. As discovered by \cite{kalay2014fragmentation}, a candidate for the limit proportion is the following: 
	\begin{equation}\label{eq.nu}
		\begin{split}
			\nu_p:\N \mapsto \R_+, \qquad \nu_p(0) = 1-p, \qquad &\nu_p(k) = c_{p} B\Ll(1 + \frac{1}{p}, k\Rr), \quad \forall k \in \N_+, \\
			&c_p = 1-p.
		\end{split}
	\end{equation}
	Here $B\Ll(x,y\Rr) = \frac{\Gamma(x)\Gamma(y)}{\Gamma(x+y)}$ is the standard Beta function, and $c_p$ is the constant of normalisation satisfying $c_{p}  \Ll(\sum_{k=1}^\infty B\Ll(1 + \frac{1}{p}, k\Rr) k\Rr) = p$. Note that normalising $(\nu_p(k))_{k\geq 1}$ to a probability measure results in the Yule--Simon distribution with parameter $\frac{1}{p}$. The Yule--Simon distribution was introduced in \cite{yule1925ii} by Yule in order to explain the power-law phenomena in biological
	genera, which was discovered in \cite{willis1922some} by him and Willis. Then this distribution was rediscovered several times in the history, including  in \cite{simon1955class} when Simon analysed the word frequency. Yule--Simon distribution also appears in various probability and statistics models, and see \cite{simkin2011re} for a summary. 
	\bigskip 
	
	\begin{theorem}[Law of large numbers to Yule--Simon distribution]\label{thm.main1} 
		Under the canonical coupling $(\Omega, \mcl F, \P)$, for every function $f:\N \mapsto \R$ satisfying ${\sup_{k \in \N} \frac{\vert f(k) \vert}{k} < \infty}$, 	and for every $q \in [1, \infty)$, we have
		\begin{align}
			\frac{1}{n}\bracket{X_n, f} \xrightarrow{n \to \infty}  \bracket{\nu_p, f}, \quad \P \text{-a.s. and in } L^q.
		\end{align}
	\end{theorem}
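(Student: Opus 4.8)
The plan is to exploit the Markov/branching structure encoded in the transition rules \eqref{eq.Transition1} and run a standard stochastic-approximation (martingale) argument, coordinate by coordinate and then uniformly. First I would reduce to the case $f = \e_k^\ast$, i.e. to showing $\frac1n X_n(k) \to \nu_p(k)$ for each fixed $k$, since the growth condition $\sup_k |f(k)|/k < \infty$ together with a tail estimate on $\sum_{k\ge K} k\, X_n(k)/n$ (controlled because $\sum_k k\, X_n(k) \le n$ deterministically, the total number of open vertices) will let me pass from finitely many coordinates to the full inner product by dominated convergence; the constant $c_p$ is chosen precisely so that $\langle \nu_p, f\rangle$ with $f(k)=k$ gives $p = \lim \frac1n \#\{v\le n: \omega(v)=1\}$, which holds by the SLLN for i.i.d. Bernoulli$(p)$, so the normalisation is consistent.

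For a fixed $k$, write $M_n^{(k)} := X_n(k) - X_1(k) - \sum_{j=1}^{n-1}\E[X_{j+1}(k)-X_j(k)\mid \F_j]$, which is a martingale with bounded increments. From \eqref{eq.Transition1} the compensator of $X_n(0)$ is deterministic, giving immediately $X_n(0)/n \to 1-p$ a.s.\ and in every $L^q$. For $k\ge 1$ the conditional drift is
\begin{align*}
\E[X_{n+1}(k)-X_n(k)\mid X_n] = \frac{p}{n}\Bigl((k-1)X_n(k-1)\,\1_{\{k\ge 2\}} + X_n(0)\,\1_{\{k=1\}} - k\,X_n(k)\Bigr).
\end{align*}
Setting $x_n(k):=\E[X_n(k)]/n$ (or working pathwise with $X_n(k)/n$ and controlling the martingale part), this is the Euler scheme with step $1/n$ for the ODE system $\dot y_k = p\bigl((k-1)y_{k-1}\1_{k\ge2} + (1-p)\1_{k=1} - k y_k\bigr)$, whose unique fixed point solves the recursion $k\,\nu_p(k) = (k-1)\nu_p(k-1)$ for $k\ge 2$ and $\nu_p(1)=(1-p)^2$; one checks $\nu_p(k)=c_p B(1+\tfrac1p,k)$ from \eqref{eq.nu} satisfies exactly $\frac{\nu_p(k)}{\nu_p(k-1)} = \frac{k-1}{k+\frac1p}$... wait, that is the wrong ratio — so in fact the correct fixed-point relation must be $(k+\tfrac1p)\nu_p(k) = k\,\nu_p(k-1)\cdot$(something); I would re-derive it carefully, but the point is that the Beta function in \eqref{eq.nu} is engineered to be the unique nonnegative fixed point, and the ODE is linear and triangular (each $y_k$ depends only on $y_{k-1}$), so it is globally stable and solvable by induction on $k$.

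The argument then proceeds by induction on $k$: assuming $X_n(k-1)/n \to \nu_p(k-1)$ a.s.\ and in $L^q$, I control $X_n(k)/n$ by writing $X_{n+1}(k)/(n+1) - X_n(k)/n$ as (drift term) + (martingale increment) + $O(1/n^2)$, summing, applying Cesàro/Toeplitz averaging to the drift (which converges to $p((k-1)\nu_p(k-1) - k\lim X_n(k)/n)$ along any convergent subsequence, forcing the limit to be $\nu_p(k)$), and using the Burkholder–Davis–Gundy inequality plus $\sum_n n^{-2} <\infty$ to kill the martingale part in $L^q$ and a.s. The base case $k=0$ is done, and $k=1$ uses it directly. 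The main obstacle, and the only genuinely non-routine part, is the \emph{uniform} tail control needed to upgrade finite-coordinate convergence to convergence of $\langle X_n,f\rangle/n$ for test functions of linear growth: I need $\lim_{K\to\infty}\limsup_n \frac1n\sum_{k> K} k\,X_n(k) = 0$ a.s., uniformly enough to interchange limits. This follows from the deterministic bound $\frac1n\sum_{k\ge1} k X_n(k) \le 1$ together with the convergence $\frac1n\sum_{k\ge1} k X_n(k)\to \sum_k k\nu_p(k)=p$ of the \emph{full} first moment (which itself is just the Bernoulli SLLN since $\sum_k kX_n(k)$ counts open vertices), so that the mass $\frac1n\sum_{k\le K}kX_n(k)$ converges to $\sum_{k\le K}k\nu_p(k)\uparrow p$, leaving no escaping mass — a tightness-type argument that closes the proof.
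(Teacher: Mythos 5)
Your proposal takes a genuinely different route from the paper. The paper's first proof embeds $(X_n)$ into a continuous-time infinite-type Markov branching process $(Z_t)$ via a Yule-process construction, truncates it at level $h$ to obtain an autonomous finite multi-type branching process, applies the classical Perron--Frobenius-based law of large numbers (Athreya--Karlin) to each truncation, identifies the Malthusian exponent as $1$ by comparison with the total Yule population, and then de-poissonizes through the stopping times $\tau_n$; its second proof goes through a site--bond coupling and the Ewens sampling formula. You instead work directly with the discrete-time chain \eqref{eq.Transition1} by stochastic approximation: Doob decomposition coordinate by coordinate, identification of the limiting triangular linear ODE system, induction on $k$, and then the same ``no escaping mass'' tail argument the paper uses in its Step~3 (your observation that $\frac1n\sum_k kX_n(k)\to p$ pins down the tail is exactly the paper's use of the total population functional). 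This is a legitimate and arguably more elementary alternative: it avoids continuous-time embedding and Perron--Frobenius entirely, at the price of having to run the Toeplitz/Kronecker analysis of the linear recursion by hand for each $k$. The deterministic bounds $|\langle X_n,f\rangle|\le n\sup_k|f(k)|/k$ and dominated convergence for the $L^q$ upgrade coincide with the paper's treatment.

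There is, however, one concrete error you must repair: your candidate fixed-point relation $k\,\nu_p(k)=(k-1)\,\nu_p(k-1)$ is wrong, and you noticed the mismatch with \eqref{eq.nu} but left it unresolved. The missing term comes from the renormalisation by $n$. Writing $y_n(k)=X_n(k)/n$ and using $\frac{1}{n+1}-\frac1n=-\frac{1}{n(n+1)}$, the recursion is
\begin{align*}
y_{n+1}(k)-y_n(k)=\frac{1}{n+1}\Bigl(p(k-1)y_n(k-1)\,\Ind{k\ge2}+p\,y_n(0)\,\Ind{k=1}-(pk+1)\,y_n(k)\Bigr)+\Delta M_{n+1}+O(n^{-2}),
\end{align*}
so the limiting ODE is $\dot y_k=p(k-1)y_{k-1}-(pk+1)y_k$ and the fixed point satisfies $y_k/y_{k-1}=\frac{p(k-1)}{pk+1}=\frac{k-1}{k+\frac1p}$ for $k\ge2$ and $y_1=\frac{p}{1+p}\,y_0=\frac{p(1-p)}{1+p}$ (not $(1-p)^2$). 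This is precisely the paper's recursion \eqref{eq.NuIteration1} and is satisfied by $\nu_p(k)=c_pB(1+\frac1p,k)$. With that correction, and since each equation is a stable scalar linear recursion given the previous coordinate (coefficient $-(pk+1)<0$) with martingale noise of increments $O(1/n)$, hence summable variances, your inductive scheme goes through.
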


	We will give two proofs for this theorem. Our first proof transforms the site-percolation on RRT as a continuous-time infinite-type branching process. Instead of applying the general Crump--Mode--Jagers branching process theory \cite{iksanov2021asymptotic}, we find a more elementary approach with truncation, in which only the classical (finite) multi-type branching results and Perron--Frobenius theorem are needed. 
	
	Our second proof explains the link between the site-percolation and bond-percolation on RRT, as a result of the following coupling. For every edge $\{u,v\} \in E(T_\infty)$, if $u < v$, then we call $u$ \textbf{the parent} of $v$ and $v$ a \textbf{child} of $u$, and denote by $u = \mathbf{p}(v)$. By convention, we just set $\mathbf{p}(1) = 0$, so its parent is not on the recursive tree, and every other vertex $v \in \N_+ \setminus \{1\}$ has a parent in $\N_+$.
	
	\begin{figure}[t]
		\centering
		\includegraphics[width=0.8\textwidth]{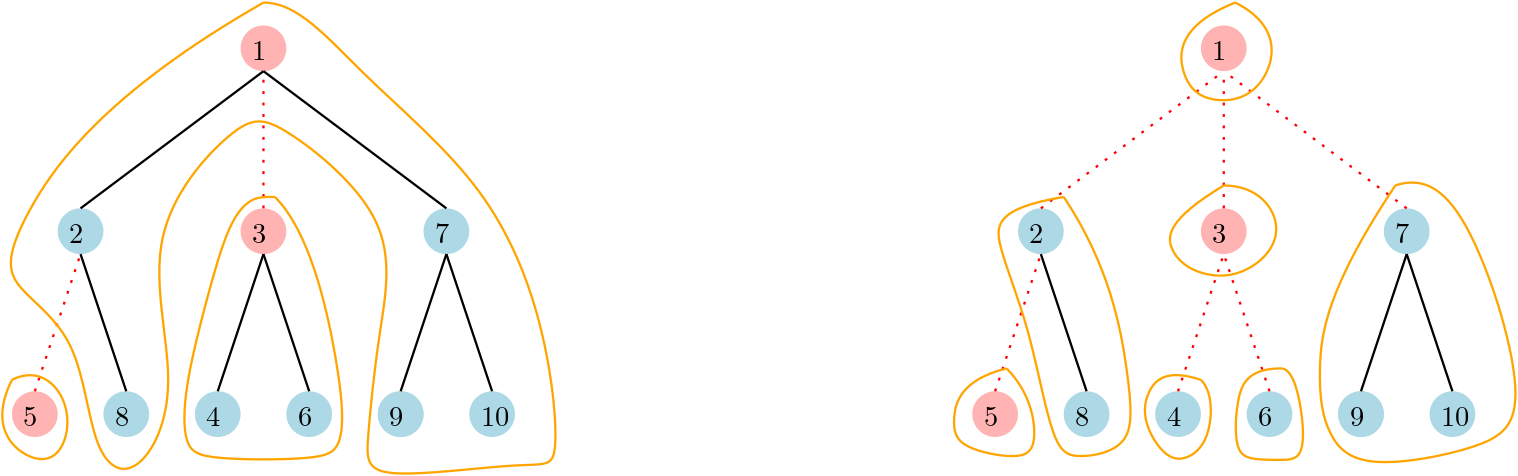}
		\caption{An illustration of the coupling between the bond-percolation (the figure on the left) and the site-percolation (the figure on the right). The vertices in red are closed, and those in blue are open.  The root-isolation is implemented, and the deleted edges are marked in red dotted line, while the orange circles stand for the clusters.}\label{fig.site-bond}
	\end{figure}
	\bigskip 
	\begin{lemma}[Site-bond-percolation coupling on RRT]\label{lem.site_bond_coupling}
		Given a random recursive tree $T_n$ with ${n \in \N_+ \cup \{\infty\}}$ and $\omega: \N_+ \to \{0,1\}$, we define  $\widetilde{T_n^{\omega}} = ([n], E(\widetilde{T_n^{\omega}}))$ as
		\begin{align}\label{eq.defBondPercolation}
			E(\widetilde{T_n^{\omega}}) := \{\{u,v\} \in [n] \times [n]: u = \mathbf{p}(v), \omega(v)=1 \},
		\end{align}
		i.e.\ every $\omega(v)$ determines whether $\{v, \mathbf{p}(v)\}$ is connected (for $v \geq 2$). Then $\widetilde{T_n^{\omega}}$ is a Bernoulli bond-percolation of parameter $p$ on RRT under $(\Omega, \mcl F, \P)$. 
		
		Moreover, we call the \textbf{root-isolation} for a cluster that we delete the edges between the root vertex and all its children vertices. Then we obtain the site-percolation $T_n^{\omega}$ from the bond-percolation $\widetilde{T_n^{\omega}}$ in the following way
		\begin{itemize}
			\item if $\omega(1) = 0$, implement the root-isolation for the cluster containing the vertex $1$ in $\widetilde{T_n^{\omega}}$;
			\item implement the root-isolation for all the other clusters.
		\end{itemize} 
	\end{lemma}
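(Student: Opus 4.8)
\emph{Part 1 (the bond-percolation claim).} The plan is to use that in a recursive tree every vertex $v \neq 1$ carries exactly one edge, namely the one to its parent, so the map $v \mapsto \{\mathbf p(v), v\}$ is a bijection from $V(T_n) \setminus \{1\}$ onto $E(T_n)$. Under the canonical coupling the family $(\omega(v))_v$ is i.i.d.\ $\operatorname{Ber}(p)$ and independent of the attachment choices defining $T_\infty$, hence of $T_n$; and by \eqref{eq.defBondPercolation} the edge $\{\mathbf p(v),v\}$ is retained in $\widetilde{T_n^\omega}$ exactly when $\omega(v)=1$. Thus, conditionally on $T_n$, each of its edges is retained independently with probability $p$, which is precisely the definition of Bernoulli($p$) bond-percolation on the RRT $T_n$. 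The argument is identical when $n=\infty$.

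\emph{Part 2 (recovering the site-percolation), structural facts.} The plan is to argue cluster by cluster. Fix a cluster $C$ of $\widetilde{T_n^\omega}$ and let $r := \min C$ be its root. I would first establish two facts. (i) $r$ is a $T_n$-ancestor of every vertex of $C$: indeed, for $v \in C$ the least common ancestor in $T_n$ of $r$ and $v$ lies on the path in $T_n$ from $r$ to $v$, which is contained in the subtree $C$, and being an ancestor of $r$ it has label at most $r = \min C$, hence equals $r$. Consequently $\mathbf p(v) \in C$ for all $v \in C \setminus \{r\}$, and since the corresponding edge then lies in $\widetilde{T_n^\omega}$ we get $\omega(v)=1$; so every vertex of $C$ except possibly $r$ is open. (ii) If $r \neq 1$ then $\{\mathbf p(r), r\} \notin E(\widetilde{T_n^\omega})$, for otherwise $\mathbf p(r)$ would lie in $C$, contradicting minimality of $r$; hence $\omega(r)=0$. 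The unique cluster whose root is $1$ has $\omega(1)$ unconstrained.

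\emph{Comparing the two retention rules.} Next I would compare the rule of $T_n^\omega$, namely $\{\omega(\mathbf p(v))=\omega(v)=1\}$ for the edge $\{\mathbf p(v),v\}$, with that of $\widetilde{T_n^\omega}$, namely $\{\omega(v)=1\}$. No edge of $T_n^\omega$ joins $C$ to its complement: by (i) an edge leaving $C$ on its parent side must be $\{\mathbf p(r), r\}$, excluded by (ii) (or because $r=1$ has no parent), while an edge $\{\mathbf p(w), w\}$ with $\mathbf p(w) \in C$ and $w \notin C$ satisfies $\omega(w)=0$ and so is absent from $T_n^\omega$ as well; hence each $T_n^\omega$-cluster lies inside a single $\widetilde{T_n^\omega}$-cluster. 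Inside $C$, the edge $\{\mathbf p(v),v\}$ with $v \in C \setminus \{r\}$ lies in $T_n^\omega$ iff $\omega(\mathbf p(v))=1$, which by (i) is automatic when $\mathbf p(v) \neq r$ and is the event $\{\omega(r)=1\}$ when $\mathbf p(v)=r$. Therefore $T_n^\omega$ restricted to $C$ coincides with $\widetilde{T_n^\omega}$ restricted to $C$ when $\omega(r)=1$ (possible only for $r=1$), and equals it with the edges from $r$ to its children deleted when $\omega(r)=0$ --- that is, the root-isolation of $C$. Taking the union over all clusters $C$ yields exactly the recipe in the statement; one also notes that the isolated root then forms a closed singleton, consistent with the bookkeeping of $X_n(0)$ in \eqref{eq.defX0}.

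\emph{Main obstacle.} The only genuinely non-formal step is the structural claim (i), that the minimal-label vertex of a $\widetilde{T_n^\omega}$-cluster is a $T_n$-ancestor of the whole cluster; everything after that is the bookkeeping above, matching the one-sided condition $\{\omega(v)=1\}$ against the two-sided condition $\{\omega(\mathbf p(v))=\omega(v)=1\}$. A minor point of care is that "children of the root" in the root-isolation must be read as children within the cluster, which by (i) are exactly the $T_n$-children of $r$ lying in $C$.
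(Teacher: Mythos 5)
Your proposal is correct and follows essentially the same route as the paper's proof: identifying edges of the bond-percolation with the child vertices to get the Bernoulli($p$) law, and observing that the discrepancy between the one-sided retention rule $\{\omega(v)=1\}$ and the two-sided rule $\{\omega(\mathbf p(v))=\omega(v)=1\}$ is exactly the removal of edges from closed cluster roots to their children. Your write-up merely makes explicit the structural facts (the minimal label of a cluster is an ancestor of the whole cluster, all non-root cluster vertices are open, and the root is closed unless it is vertex $1$) that the paper's shorter argument takes for granted.
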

	
	Roughly speaking, it suffices to remove the edges directly attached to the root vertices in the bond-percolation to obtain the site-percolation (with special consideration for the cluster containing vertex 1); see Figure~\ref{fig.site-bond} for an illustration. Because the property of the bond-percolation $\widetilde{T_n^{\omega}}$ has been intensively discussed and a Yule--Simon distribution appears (see for example \cite{bertoin2022}), we can obtain similar results for the site-percolation using the root-isolation and the coupling between the two percolations. 
	The key is then reduced to studying the behaviour of the root-isolation operation in Lemma~\ref{lem.site_bond_coupling}, and this is related to the Ewens sampling formula.
	
	\medskip
	
	Theorem~\ref{thm.main1} studies the statistic of the clusters of a fixed size. We are then interested in the behaviour of the largest cluster. In the previous work \cite{kalay2014fragmentation}, the authors give a heuristic argument: as the proportion of the clusters of size $k$ is roughly $k^{-1-\frac{1}{p}}$, and the largest size, say $k_*$, should contain at least one cluster, then we have
	\begin{align}\label{eqn:heuristic}
		n \cdot \sum_{k = k_*}^\infty k^{-1-\frac{1}{p}} = 1 \Longrightarrow k_* \simeq n^p.
	\end{align}
	Our second result aims to give a rigorous proof. For every $q \in [1, \infty)$, we define the $\ell^q$ distance for $\mathbf{x} = (x_i)_{i \in \N_+} \in \R^{\N_+}$
	\begin{align}\label{eq.deflq}
		\norm{\mathbf{x}}_{\ell^{q}} := \Ll(\sum_{i=1}^\infty \vert x_i\vert^q\Rr)^{\frac{1}{q}}.
	\end{align}
	For $q = \infty$, then we define $\norm{\mathbf{x}}_{\ell^{\infty}} = \sup_{i \in \N_+} \vert x_i \vert$. Moreover, for every $\mathbf{x} \in \ell^q \cap {\R_+}^{\N_+}$, we can define its ranked version $\mathbf{x}^{\downarrow} = (x_i^{\downarrow})_{i \in \N_+}$ that
	\begin{align}\label{eq.defRankVersion}
		x_1^{\downarrow} \geq x_2^{\downarrow} \geq x_3^{\downarrow} \geq \cdots \geq 0.
	\end{align}
	Recall $(\Pi^{(n)}_i)_{i \in \N_+}$ the collection of vertex sets that partition $[n]$ in Definition~\ref{def.Canonical}, then \eqref{eq.defRankVersion} applies to $(\vert \Pi^{(n)}_i\vert)_{i \in \N_+}$ and we write its ranked version as
	\begin{align}\label{eq.defRank}
		\vert \Pi^{(n), \downarrow}_1 \vert \geq \vert \Pi^{(n), \downarrow}_2 \vert \geq 	\vert \Pi^{(n), \downarrow}_3 \vert \geq \cdots \geq 0.
	\end{align}
	Our second theorem states the convergence of this ranked cluster size sequence.
	

	\bigskip 
	\begin{theorem}[Scaling limit of the largest cluster sizes]\label{thm.main2}
		Under the canonical coupling probability space $(\Omega, \mcl F, \P)$, there exists a non-zero random variable $W = (W_i)_{i \in \N_+} \in \ell^{q}$ for all $q \in (\frac{1}{p}, \infty]$ such that  
		\begin{align}\label{eq.main2}
			n^{-p} \Ll(\vert \Pi^{(n), \downarrow}_1 \vert, \vert \Pi^{(n), \downarrow}_2 \vert, 	\vert \Pi^{(n), \downarrow}_3 \vert, \cdots\Rr) \xrightarrow[\ell^{q}]{n \to \infty} \Ll(W^{\downarrow}_1, W^{\downarrow}_2, W^{\downarrow}_3, \cdots\Rr) \qquad \P\text{-a.s.}  ,
		\end{align}
		where the limit is the ranked version of $W$.
	\end{theorem}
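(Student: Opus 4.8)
The plan is to derive Theorem~\ref{thm.main2} from the corresponding scaling limit for bond-percolation together with the coupling of Lemma~\ref{lem.site_bond_coupling}, the new ingredient being a description of the root-isolation operation through the Ewens sampling formula; the limit $W$ will appear as a superposition of independent Poisson--Dirichlet splits of the rescaled bond-cluster masses. Throughout, write $\widetilde{\Pi}_i$, $i\in\N_+$, for the clusters of the bond-percolation $\widetilde{T_\infty^{\omega}}$, indexed by increasing root as the $\Pi_i$ are, and set $\widetilde{\Pi}^{(n)}_i:=\widetilde{\Pi}_i\cap[n]$.

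First I would establish the scaling limit for bond-percolation: under $(\Omega,\mcl F,\P)$, $n^{-p}\big(\vert\widetilde{\Pi}^{(n),\downarrow}_i\vert\big)_{i\in\N_+}\to V^{\downarrow}$ almost surely in $\ell^q$ for every $q\in(\tfrac1p,\infty]$, where $V=(V_i)_{i\in\N_+}$ with $V_i:=\lim_n n^{-p}\vert\widetilde{\Pi}^{(n)}_i\vert$ is a non-zero element of $\ell^q$. This is closely related to results in \cite{bertoin2022,baur2015fragmentation} and is readily obtained from the continuous-time branching embedding used in the first proof of Theorem~\ref{thm.main1}: in that embedding each bond-cluster evolves as an independent rate-$p$ Yule process, so $n^{-p}\vert\widetilde{\Pi}^{(n)}_i\vert$ converges to the (possibly null) martingale limit $V_i$, only the ``infinite'' bond-clusters contribute, and at least the one containing vertex~$1$ does, with $V_1>0$ a.s., so that $V^{\downarrow}_1>0$ a.s.

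Next, combine Lemma~\ref{lem.site_bond_coupling} with the fact that each bond-cluster is, conditionally on its size, a uniform recursive tree and that distinct bond-clusters are conditionally independent (a standard property of bond-percolation on recursive trees; cf.\ the analogous statement for site-clusters, Lemma~\ref{lem.RRTiid}). Root-isolating a uniform recursive tree on $m$ vertices partitions its $m-1$ non-root vertices into the vertex sets of the subtrees hanging from the root; reading these vertices in increasing label order, each attaches to a uniformly chosen earlier cluster-vertex, so their subtree-memberships evolve exactly as a Chinese restaurant process with parameter $\theta=1$, i.e.\ the subtree sizes follow the Ewens sampling formula with parameter $1$. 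In the canonical coupling these restaurant processes, one per bond-cluster, grow consistently in $n$ and stay conditionally independent given the sizes; hence for each $i$ the ranked subtree sizes of $\widetilde{\Pi}_i$ divided by $\vert\widetilde{\Pi}^{(n)}_i\vert-1$ converge a.s.\ --- in $\ell^1$ by Scheff\'e's lemma, thus in $\ell^q$ for every $q\ge1$ --- to a $\PD(1)$-type sequence $R^{(i)}$, the $(R^{(i)})_i$ being i.i.d.\ and independent of $(V_i)_i$. Consequently $n^{-p}$ times the ranked sizes of the site-clusters contained in $\widetilde{\Pi}_i$ converges to $V_i R^{(i),\downarrow}$ in $\ell^q$; the closed singleton that root-isolation creates for each cluster, and the absence of a split for the vertex-$1$ cluster when $\omega(1)=1$, only perturb this at scale $O(1)\ll n^p$ and are negligible.

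Finally, define $W$ to be the ranked superposition of the families $\{V_i R^{(i)}\}_{i\in\N_+}$. Since the entries of each $R^{(i)}$ lie in $[0,1]$ and sum to $1$, $\norm{R^{(i)}}_{\ell^q}\le\norm{R^{(i)}}_{\ell^1}=1$, so $\norm{W}_{\ell^q}^q\le\sum_i V_i^q=\norm{V}_{\ell^q}^q<\infty$ for $q>\tfrac1p$, and $W$ is non-zero because the rescaled largest site-cluster is bounded below by a strictly positive quantity (by $V_1$ on $\{\omega(1)=1\}$, and by $V^{\downarrow}_1$ times the leading $\PD(1)$ atom otherwise). To upgrade the per-cluster convergence to convergence of the full ranked site-cluster sequence in $\ell^q$, I would (i) use the standard non-expansiveness of the decreasing rearrangement (and of countable concatenation) on the cone of non-negative $\ell^q$-sequences to reduce, for each fixed $\delta>0$, to the finitely many bond-clusters with $V_i>\delta$, handled by the previous paragraph; and (ii) control the remaining ``dust'': using $\sum_a m_a^q\le\big(\sum_a m_a\big)^q$ for $q\ge1$, the site-clusters lying inside bond-clusters of rescaled size $\le\delta$ contribute at most $\sum_{i:\,n^{-p}\vert\widetilde{\Pi}^{(n)}_i\vert\le\delta}\big(n^{-p}\vert\widetilde{\Pi}^{(n)}_i\vert\big)^q$ to $\norm{\cdot}_{\ell^q}^q$, which by the bond-percolation limit tends to $\sum_{i:\,V_i\le\delta}V_i^q$, and this vanishes as $\delta\downarrow0$. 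Letting $\delta\downarrow0$ gives the almost-sure $\ell^q$ convergence for every finite $q>\tfrac1p$, and the case $q=\infty$ follows since $\ell^q$-convergence forces sup-norm convergence. I expect the main obstacle to be step (ii) --- the uniform-in-$n$ $\ell^q$ tail control of the small clusters --- together with making the Chinese-restaurant scaling rigorous inside the canonical coupling (the conditional independence and the consistency in $n$ of the restaurant processes attached to the different bond-clusters); if the bond-percolation $\ell^q$ scaling limit must itself be re-established with almost-sure convergence that is the other substantial piece, while the rearrangement bookkeeping is routine.
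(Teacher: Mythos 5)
Your overall architecture matches the paper's: couple with bond-percolation (Lemma~\ref{lem.site_bond_coupling}), describe root-isolation of a uniform recursive tree by the uniform stick-breaking/GEM limit so that the per-cluster limits are $\widetilde W_i$ times independent stick-breaking weights (this is exactly Proposition~\ref{prop.SiteClusterLimit}), and use non-expansiveness of the decreasing rearrangement plus a coordinate-wise-convergence-and-tail-control criterion for $\ell^q$ convergence. However, there is a genuine gap at the step you yourself flag: your entire $\ell^q$ upgrade rests on the almost-sure $\ell^q$ scaling limit for the \emph{bond}-percolation clusters, which you declare ``readily obtained'' from the martingale limits of the individual Yule processes. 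Coordinate-wise convergence of $n^{-p}\vert\widetilde{\Pi}^{(n)}_i\vert$ is indeed classical (\cite{baur2015fragmentation,guerin2023fixed}, Proposition~\ref{prop.BondClusterLimit}), but the a.s.\ convergence of the whole sequence in $\ell^q$ is \emph{not} in the literature — the paper explicitly remarks that its argument ``completes'' \cite[Theorem~3.1, Lemma~3.3]{baur2015fragmentation} on precisely this point — and it requires a uniform-in-$n$ control of the $\ell^q$ tail $\sum_{i\ge k}(n^{-p}\vert\widetilde\Pi^{(n)}_i\vert)^q$ that no soft argument supplies. Your dust-control step (ii) is therefore circular in spirit: it converts the site-cluster tail into the bond-cluster tail, but the bond-cluster tail is the same unproved quantity.

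The missing ingredient is the paper's Lemma~\ref{lem.Moment}: working in the continuous-time embedding, one bounds
\begin{align*}
\E\Ll[\sum_{i:\,\mathbf{b}(i)\ge k}\Ll(\sup_{t}e^{-pt}\vert\TT^{i,\omega}_t\vert\Rr)^{q}\Rr]\le C_{p,q}\,e^{-(pq-1)k},
\end{align*}
by slicing the birth times into unit intervals, applying Doob's maximal inequality to the rate-$p$ Yule martingales $e^{-pt}G^i_t$, and using Wald's identity together with $\E[\#\{i:\mathbf{b}(i)<m+1\}]=e^{m+1}$; the exponent is negative exactly because $q>1/p$. Borel--Cantelli then gives the a.s.\ tail control \eqref{eq.TailBlock} directly for the \emph{site}-clusters (using that $\Pi^{(n)}_i$ is contained in the subtree rooted at a vertex with label, hence birth rank, at least $i$), after which the criterion in Proposition~\ref{prop.ellp}(3) concludes. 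Without this estimate, or an equivalent one establishing the bond-percolation $\ell^q$ limit, your proof does not close; with it, your route and the paper's coincide, and the same estimate also yields the bond-percolation $\ell^q$ limit you wanted to take as input.
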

	\begin{remark}
		We also have the characterisation of the limit random variable $W$, see Proposition~\ref{prop.SiteClusterLimit} for details. Besides, our proof also applies to the bond-percolation setting to obtain a similar scaling limit, which completes \cite[Theorem~3.1, Lemma~3.3]{baur2015fragmentation}. 
	\end{remark}
	
	\medskip
	In the remaining of the paper, we will give the first proof of the Theorem~\ref{thm.main1} in Section~\ref{sec.branching} using branching processes. Then in Section~\ref{sec.bond}, we prove Lemma \ref{lem.site_bond_coupling}, explaining the link between the bond-percolation and site-percolation on RRT and use it to give a second proof of Theorem~\ref{thm.main1}. This link also helps us characterize the limit random variable in Theorem~\ref{thm.main2}, and we give the proof of its convergence in Section~\ref{sec.W}. Finally, Section~\ref{sec.discussions} includes some further discussions of this model.

	\section{Yule--Simon distribution 
		via branching processes}\label{sec.branching}
	In this section, we prove Theorem~\ref{thm.main1} from the viewpoint of branching processes.

	\subsection{Embedding an RRT into a branching process}
	We show at first that, as a Markov process (see \eqref{eq.Transition1}), $(X_n)_{n\in \N_+}$ can be considered being embedded into a branching process. Such idea was already used in \cite[Theorem~3.1]{baur2015fragmentation}. It is well-known that the infinite RRT can be realised by the continuous-time Yule process, that each vertex generates independently a new vertex with rate $1$ and obtains its label by the order of birth time. We generalised this idea to the site-percolation on RRT.
	
	\begin{definition}[Site-percolation on RRT via Yule process]\label{def.ContinuousFrag}
		Starting from a vertex with probability $p$ to be open and $(1-p)$ to be closed, every vertex generates independently a new vertex with exponential clock of rate $1$. The new vertex independently has probability $p$ to be open and $(1-p)$ to be closed, and is labelled by its birth order. We attach an edge between an open parent vertex and every of its open children vertices that it generated.  
	\end{definition}
	
	Note that the construction above is another way of defining site-percolation as in Definition \ref{def.Canonical}. The Yule process generates an infinite random recursive tree, and each vertex is determined whether open or closed when it is born.  To state it more precisely,  we introduce some more notations for Definition \ref{def.ContinuousFrag}. 
	For every vertex $v \in \N_+ \setminus \{1\}$, we denote by $\mathbf{b}(v)$ as its birth time and set $\mathbf{b}(1) = 0$ by convention. We denote by $\mathcal{T}_t = (V(\mathcal{T}_t), E(\mathcal{T}_t))$ as the random recursive tree formed by vertices appeared before $t$:
	\begin{align*}
		V(\mathcal{T}_t) &:= \{v \in \N_+: \mathbf{b}(v) \leq t\}, \\
		E(\mathcal{T}_t) &:= \{\{v, u\} \subset V(\mathcal{T}_t) \times V(\mathcal{T}_t): u = \mathbf{p}(v)\}.
	\end{align*}
	Thus, $V(\mathcal{T}_t)$ and $E(\mathcal{T}_t)$ are random sets. We use $\omega$, the same Bernoulli random variable in Definition~\ref{def.Canonical}, to indicate the state of every site, and denote by $\mathcal{T}_t^\omega = (V(\mathcal{T}_t), E(\mathcal{T}_t^\omega))$ the site-percolation on RRT at the time $t$
	\begin{align*}
		E(\mathcal{T}_t^\omega) := \{\{u,v\} \in E(\mathcal{T}_t): \omega(u) = \omega(v) = 1\}.
	\end{align*}
	The natural filtration for the site-percolation on RRT before time $t$ is defined by 
	\begin{align}\label{eq.defGt}
		\G_t := \sigma\Ll(V(\mathcal{T}_s)_{s \leq t}, E(\mathcal{T}_s)_{s \leq t}, (\omega(v))_{v \in V(\mathcal{T}_t)}\Rr),
	\end{align}
	then the canonical coupling space is now enriched as $(\Omega, \mcl F, (\G_t )_{t \geq 0}, \P)$. Especially, using $\vert V(\mathcal{T}_t) \vert$ for its total number of vertices at the moment $t$, then we define the stopping time 
	\begin{align}\label{eq.stopingTime}
		\tau_n := \inf\Ll\{t \in \R : \vert V(\mathcal{T}_t) \vert = n\Rr\}.
	\end{align}
	We can enrich the probability space defined in Definition~\ref{def.Canonical} as $(\Omega, \mcl F, (\G_t )_{t \geq 0}, \P)$ by identifying 
	\begin{equation}\label{eq.correspondence}
		(T_n^\omega)_{n \in \N_+} = (\mathcal{T}_{\tau_n}^\omega)_{n \in \N_+}.
	\end{equation}
	In another word,  $(T_n^\omega)_{n \in \N_+}$ is constructed using $(\mathcal{T}_t^\omega)_{t \geq 0}$ and embedded in the process of continuous time. 
	
	
	The equation \eqref{eq.correspondence} justifies a basic fact for the clusters in site-percolation.
	\begin{lemma}\label{lem.RRTiid}
		Given the size of the clusters in $T_n^\omega$, then each cluster follows the law of RRT independently.
	\end{lemma}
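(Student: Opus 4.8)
The plan is to read the statement off the sequential (uniform-attachment) construction of the RRT that underlies \eqref{eq.correspondence}. Since $\mathcal{T}_t$ is a Yule process, at each birth the newborn vertex attaches to a uniformly chosen vertex among those already present; equivalently, $(T_m^\omega)_{m\in\N_+}$ is the Markov chain in which $T_1^\omega$ consists of a single vertex (open with probability $p$) and, given $T_m^\omega$, the vertex $m+1$ is attached to a uniform vertex of $[m]$ and then independently declared open with probability $p$ and closed otherwise. I will work with this description, noting that the marks $\omega$ are independent of the underlying tree.

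First I would track how one fixed cluster evolves along this chain, writing $k^{(m)}$ for its size in $T_m^\omega$. At a transition $T_m^\omega\to T_{m+1}^\omega$ only three things relevant to this cluster can happen: $m+1$ is closed, or it is open but attaches outside the cluster (in both cases the cluster is unchanged), or it is open and attaches to a vertex of the cluster. In the last case the cluster grows to size $k^{(m)}+1$, and conditionally on this event the parent of $m+1$ is uniform among the $k^{(m)}$ vertices of the cluster, because the original parent was uniform on $[m]$ and the mark of $m+1$ is independent of it. Relabelling the vertices of a cluster by their birth order -- which is consistent with their order as labels in $[n]$ and preserves the recursive-tree property -- this is exactly the uniform-attachment rule, so the cluster, considered at the first time it reaches size $k$, is a uniform recursive tree on $[k]$.

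For the joint statement I would encode, for $1\le m<n$, the ``type'' $S_m$ of the transition $T_m^\omega\to T_{m+1}^\omega$: whether it produces a closed singleton, founds a new (open) cluster, or enlarges one specified existing cluster. The sequence $S=(S_m)_{1\le m<n}$ is a function of the marks and of which block of the current partition contains each chosen parent; it is measurable and determines all the final cluster sizes $(|\Pi_i^{(n)}|)_i$. The whole construction then splits into (i) the randomness producing $S$ and (ii) for each growth step, an auxiliary choice of parent which, expressed in the internal (birth-order) labelling $\{1,\dots,k^{(m)}\}$ of the cluster being enlarged, is uniform on that set -- this being true irrespective of the internal shape already built, and independently across steps, across clusters, and of $S$. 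Consequently, conditionally on $S$, the clusters are independent and the one with $|\Pi_i^{(n)}|=k_i$ is produced by $k_i-1$ independent uniform attachments, hence is a uniform RRT on $[k_i]$.

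Since this conditional law depends on $S$ only through the sizes $(k_i)_i$, and $(|\Pi_i^{(n)}|)_i$ is $\sigma(S)$-measurable, the tower property yields the same product-of-uniform-RRTs law conditionally on $(|\Pi_i^{(n)}|)_i$ alone; the case $n=\infty$ follows by letting $n\to\infty$ (or by running the same argument directly). The only genuinely delicate point is (ii): one must phrase the parent-selection at a growth step in the cluster's own labelling and verify that its conditional law is uniform on $\{1,\dots,k^{(m)}\}$ and independent of everything recorded in $S$; granting this, the rest is bookkeeping.
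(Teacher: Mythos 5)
Your proof is correct, but it takes a genuinely different route from the paper's. The paper proves the lemma in one stroke via the continuous-time embedding of Definition~\ref{def.ContinuousFrag}: by thinning of the Poisson clocks, each open vertex produces new \emph{open} children at rate $p$, so the open clusters are a family of \emph{independent} Yule processes of rate $p$; a Yule process, read off by birth order, is exactly the uniform-attachment chain, and conditioning on sizes and transferring back through the stopping times $\tau_n$ of \eqref{eq.correspondence} gives the claim. Independence across clusters is thus automatic from the independence of the exponential clocks. You instead work entirely in discrete time: you condition the uniform parent choice on the block of the current partition it lands in, observe that the residual choice is uniform within that block and, read in the cluster's birth-order labelling, reproduces the uniform-attachment rule, and then make the independence across clusters explicit through the skeleton process $S$ and the tower property. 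Your argument is more elementary and self-contained (no Yule embedding, no thinning), at the cost of the bookkeeping you yourself flag as the delicate point --- which you do handle correctly, since the bijection from a block to $\{1,\dots,k^{(m)}\}$ is $\sigma(S)$-measurable and the residual uniform choices are independent of $S$ and of each other. The paper's route buys brevity and reuses machinery (the Yule embedding) needed elsewhere in Section~\ref{sec.branching}; yours would stand on its own without Section~\ref{sec.branching} at all.
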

	\begin{proof}
		From Definition~\ref{def.ContinuousFrag} and using the thinning property of Poisson process, then every open vertex generates new open vertices independently with rate $p$. This gives us a family of independent Yule processes of density $p$, so each cluster in $(\mathcal{T}_{t}^\omega)_{t \geq 0}$ follows the law of RRT given its size. Viewing \eqref{eq.correspondence}, the clusters in $(T_n^\omega)_{n \in \N_+}$ satisfy the same property.
	\end{proof}

	Recall that $(X_n)_{n\in\N_+}$ counts the numbers of clusters in $(T_n^\omega)_{n \in \N_+}$. For convenience, we introduce similarly $Z_t := \Ll(Z_t(0), Z_t(1), Z_t(2), \ldots \Rr)$ as the counterpart for $(\mathcal{T}_{t}^\omega)_{t\geq 0}$: 
	\begin{align}\label{eq.defZ}
		Z_t(k) := \text{ number of open clusters of size } k \text{ in } \mathcal{T}_{t}^\omega, \qquad k \geq 1,
	\end{align}
	and
	\begin{align}\label{eq.defZ0}
		Z_t(0) := \text{ number of  closed vertices in } \mathcal{T}_{t}^\omega.
	\end{align} 
	Then thanks to \eqref{eq.correspondence}, we have 
	\begin{equation}\label{eqn:xz}
		(X_n)_{n\in\N_+}=(Z_{\tau_n})_{n\in\N_t}.
	\end{equation}
	Similar as \eqref{eq.Transition1}, $(Z_t)_{t \geq 0}$ can be seen as an infinite-type Markov branching process, with $\{0,1,2,\ldots\}$ for the types of individuals, and $Z_t(k)$ for the number of individuals of type $k$. Especially, the type $k \in \N_+$ stands for the open cluster of size $k$, and the type $0$ stands for the closed vertices.

	\begin{corollary}\label{cor.Z}
		In the space $(\Omega, \mcl F, (\G_t )_{t \geq 0}, \P)$, the process $(Z_t)_{t \geq 0}$ is a Markov branching process of the generator 	
		\begin{multline}\label{eq.Transition2}
			\L Z_t = \lim_{\Delta t \searrow 0} \frac{\E\Ll[Z_{t + \Delta t} - Z_t \, \vert \mcl G_t\Rr]}{\Delta t} \\
			= Z_t(0)\Big((1-p)\e_0 + p \e_1\Big)  + \sum_{k=1}^\infty Z_t(k)\Big(1-p)k\e_0 - p k \e_k + p k \e_{k+1} \Big).
		\end{multline}
		\begin{itemize}
			\item At $t=0,$ we start with one individual whose type is random and can only be of type $0$ with probability $(1-p)$ or of type $1$ with probability $p$.
			
			\item At any time $t > 0$, 
			\begin{itemize}
				\item an individual of type $0$ will produce a new individual of type $0$ at rate $(1-p)$, and produce a new individual of type $1$ at rate $p$;
				\item an individual of type $k \in \N_+$ will produce a new individual of type $0$ at rate $(1-p)k$, and change to type $(k+1)$ at rate $p k$.
			\end{itemize}
		\end{itemize}
	\end{corollary}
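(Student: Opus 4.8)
The plan is to read off the generator directly from the continuous-time construction in Definition~\ref{def.ContinuousFrag}, using the memorylessness of the exponential clocks. First I would recall that in the Yule construction, each of the $\vert V(\mathcal T_t)\vert$ vertices present at time $t$ carries an independent rate-$1$ exponential clock, and the clocks are independent of the (already-determined) states $\omega(v)$; hence in an infinitesimal interval $[t,t+\Delta t]$ exactly one new vertex is born with probability $\vert V(\mathcal T_t)\vert\,\Delta t + o(\Delta t)$, two or more births have probability $o(\Delta t)$, and the parent of that new vertex is uniform among the current vertices. The new vertex is independently open (prob.\ $p$) or closed (prob.\ $1-p$). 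So it suffices to track how a single birth event changes the type-count vector $Z_t$, and then sum the rates over all vertices, grouped by the type of the cluster to which the parent belongs.

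Next I would do the case analysis for one birth, conditionally on $\mcl G_t$. If the new vertex is closed, then regardless of its parent it becomes an isolated closed vertex, contributing $+\e_0$; since the new vertex is closed with probability $1-p$ and births occur at total rate $\vert V(\mathcal T_t)\vert = Z_t(0) + \sum_{k\ge 1} k\,Z_t(k)$, this produces the aggregate rate $(1-p)\bigl(Z_t(0)+\sum_{k\ge1}kZ_t(k)\bigr)$ for the jump $+\e_0$, which I would then distribute: a closed vertex ($Z_t(0)$ of them) contributes rate $(1-p)$ each, and each vertex inside an open cluster of size $k$ contributes rate $(1-p)$, i.e.\ rate $(1-p)k$ per cluster. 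If the new vertex is open (prob.\ $p$) and its parent is a closed vertex, the new vertex is again isolated but open, so it starts a new size-$1$ open cluster: jump $+\e_1$ at aggregate rate $p\,Z_t(0)$. If the new vertex is open and its parent lies in an open cluster of size $k$, the new edge joins it to that cluster, so that cluster grows from size $k$ to size $k+1$: jump $-\e_k+\e_{k+1}$, occurring at rate $pk$ for each such cluster (there are $k$ choices of parent inside the cluster), hence aggregate rate $pk\,Z_t(k)$. Collecting these contributions and writing $\L Z_t = \lim_{\Delta t\searrow 0}\Delta t^{-1}\E[Z_{t+\Delta t}-Z_t\mid\mcl G_t]$ gives exactly \eqref{eq.Transition2}; the initial condition is immediate from the first line of Definition~\ref{def.ContinuousFrag}, where the single root is open with probability $p$ (type $1$) and closed with probability $1-p$ (type $0$). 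The per-individual branching rules in the bullet points are just the same statement read vertex-by-vertex, together with the observation that changing a type-$k$ individual to type $k+1$ is the branching-process encoding of the jump $-\e_k+\e_{k+1}$.

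The only genuinely delicate point — and the one I would be careful to spell out — is that this is a legitimate \emph{branching} process, i.e.\ that distinct clusters (and distinct closed vertices) evolve independently. This follows from the thinning/splitting property of Poisson processes already invoked in the proof of Lemma~\ref{lem.RRTiid}: the rate-$1$ birth clock attached to each vertex splits into an independent rate-$p$ "open-offspring" clock and rate-$(1-p)$ "closed-offspring" clock, offspring states are i.i.d., and a vertex's clocks depend only on that vertex, so the whole forest of closed vertices and open clusters is driven by mutually independent Poisson inputs; no interaction couples different clusters. I do not expect any real obstacle here — the corollary is essentially a restatement of Definition~\ref{def.ContinuousFrag} — so the "proof" is really just this bookkeeping, and I would keep it short.
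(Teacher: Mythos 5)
Your proposal is correct and matches the paper's intent: the corollary is stated in the paper without a separate proof, being read off directly from the Yule-process construction of Definition~\ref{def.ContinuousFrag} exactly as you do, with the case analysis on the state of the newborn vertex and the type of its parent, and with the branching (independence) property supplied by the Poisson thinning argument already used for Lemma~\ref{lem.RRTiid}. Your bookkeeping of the rates and the initial condition is accurate, so nothing further is needed.
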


	The advantage of the continuous-time embedding in Definition~\ref{def.ContinuousFrag} is that, every individual evolves independently, and the transition rate only depends on the individual type. Then it suggests the following law of large numbers for branching processes. Recall that the limit measure $\nu_p$ is defined in \eqref{eq.nu}. 
	\begin{proposition}[Law of large numbers for $(Z_t)_{t \geq 0}$]\label{prop.LLN}
		There exists a positive random variable $\mathcal{W} \sim \operatorname{Exp}(1)$ following the standard exponential distribution, such that for every function $f:\N \mapsto \R$ satisfying $\sup_{n} \frac{\vert f(n) \vert}{n} < \infty$, we have 
		\begin{align}\label{eq.LLN}
			e^{-t}\bracket{Z_t, f} \xrightarrow{t \to \infty} \mathcal{W}\bracket{\nu_p, f}, \quad \P \text{-a.s. and in }  L^2.
		\end{align}
	\end{proposition}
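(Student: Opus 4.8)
The plan is to prove Proposition~\ref{prop.LLN} by a truncation argument that reduces the infinite-type branching process to finite-type ones, where the classical theory applies. Fix a large truncation level $N \in \N_+$, and consider the modified process $(Z_t^{(N)})_{t \geq 0}$ in which an individual of type $N$ does \emph{not} move to type $N+1$ but instead stays at type $N$ (keeping the rate-$(1-p)k$ production of type-$0$ individuals, or simply freezing it). This is a finite-type ($N+1$ types) Markov branching process; it is supercritical with Malthusian parameter $1$, because each individual of any type produces offspring at total rate essentially $1$ per capita in the relevant sense — more precisely, the mean matrix $M_t = \E[\,\cdot\,]$ has $e^{t}$ as its Perron eigenvalue. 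I would compute the infinitesimal mean matrix $A$ from the generator \eqref{eq.Transition2} (restricted to types $0,\dots,N$), verify it is essentially nonnegative and irreducible on the types reachable from the initial type, apply the Perron--Frobenius theorem to get a simple leading eigenvalue $\lambda_N$ with positive left/right eigenvectors $u_N, v_N$, and then invoke the classical Kesten--Stigum / Athreya law of large numbers for multitype Markov branching processes: $e^{-\lambda_N t} Z_t^{(N)} \to \mathcal{W}_N v_N$ a.s.\ and in $L^2$, where $\mathcal{W}_N$ is the limit of the intrinsic martingale. The key computation is to check $\lambda_N = 1$ exactly for every $N$ (this should follow because the ``column sums'' of $A$ corresponding to reproduction are all $1$), so that the normalisation $e^{-t}$ is the correct one uniformly in $N$, and to identify $v_N$ as (a scalar multiple of) the truncation of $\nu_p$.

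Next I would control the error between $Z_t$ and $Z_t^{(N)}$. Because the only difference is that in the true process, mass can flow past level $N$ into types $N+1, N+2, \dots$, and such flow only happens after an individual has been pushed up through all of types $1, \dots, N$, the discrepancy $\langle Z_t, f\rangle - \langle Z_t^{(N)}, f\rangle$ is governed by the total size (weighted by $f$, which grows at most linearly) of clusters of size $> N$. Using the growth bound $|f(k)| \le C k$, it suffices to bound $e^{-t} \sum_{k > N} k\, Z_t(k)$. I would estimate $\E[e^{-t}\sum_{k>N} k Z_t(k)]$ directly from the generator: writing $m_t(k) := \E[Z_t(k)]$, the ODE system $\frac{d}{dt} m_t(k) = p(k-1) m_t(k-1) - pk\, m_t(k)$ for $k \ge 2$ (with the boundary terms for $k=0,1$) can be solved or estimated to show $\sum_{k>N} k\, m_t(k) \le \varepsilon(N) e^{t}$ with $\varepsilon(N) \to 0$ as $N \to \infty$, uniformly in $t$ — indeed this tail sum, after normalisation, converges to $\sum_{k>N} k\,\nu_p(k)$, which is summable since $\nu_p(k) \sim c_p k^{-1-1/p}$ and $1/p > 0$ makes $\sum k \cdot k^{-1-1/p} = \sum k^{-1/p}$ convergent precisely when $p < 1$. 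Combining the finite-$N$ a.s.\ convergence with an $L^1$ (or $L^2$) tail bound that is uniform in $t$ and vanishes as $N\to\infty$ gives, via a standard $\varepsilon/3$ argument, the a.s.\ and $L^2$ convergence of $e^{-t}\langle Z_t, f\rangle$ to a limit of the form $\mathcal{W}\langle \nu_p, f\rangle$, where $\mathcal{W}$ is the common (consistent) limit of the martingales $\mathcal{W}_N$.

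It remains to identify $\mathcal{W} \sim \operatorname{Exp}(1)$. The clean way is to note that the \emph{total} population $|V(\mathcal{T}_t)| = \sum_k k Z_t(k) + Z_t(0)$ — wait, more carefully, the total number of vertices is a pure Yule process started from one individual, so $e^{-t}|V(\mathcal{T}_t)| \to \mathcal{W}$ a.s.\ with $\mathcal{W} \sim \operatorname{Exp}(1)$, by the classical result on the Yule process. Taking $f(k) = k$ for $k \ge 1$ and $f(0)=1$ in our limit (this $f$ satisfies the growth hypothesis) shows $e^{-t}\langle Z_t, f\rangle \to \mathcal{W}' \langle \nu_p, f\rangle$ for the branching-process limit $\mathcal{W}'$; but $\langle Z_t, f\rangle$ is exactly $|V(\mathcal{T}_t)|$ and $\langle \nu_p, f\rangle = \sum_{k\ge1} k\nu_p(k) + \nu_p(0) = p + (1-p) = 1$, so $\mathcal{W}' = \mathcal{W} \sim \operatorname{Exp}(1)$, and all the martingale limits coincide with this single random variable. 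The main obstacle I anticipate is the uniform-in-$t$ tail estimate $\sup_t e^{-t}\sum_{k>N} k\, m_t(k) \to 0$: one must solve or carefully bound the moment ODEs and check that the generating function $G_t(x) := \sum_k m_t(k) x^k$ behaves well near $x=1$, so that differentiating once (to bring down the factor $k$) still gives a bound of order $e^t$ with a coefficient that is the tail of a convergent series. This is where the condition $p \in (0,1)$ (equivalently $1/p > 0$ and the relevant series converges) enters essentially, and it is the only genuinely analytic part of the argument; everything else is an application of Perron--Frobenius and the classical multitype branching-process limit theorems.
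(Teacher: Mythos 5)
There is a genuine gap in the truncation step, and it is exactly the point the paper's construction is designed to handle. Your truncated process ``freezes'' individuals at type $N$ (they stop climbing, and reproduce type-$0$ individuals at rate $(1-p)N$ at most). Such a process loses reproduction relative to the true one, and its Perron root is \emph{strictly less} than $1$: for $N=1$ the mean matrix is $\begin{pmatrix} 1-p & p \\ 1-p & 0\end{pmatrix}$, whose leading eigenvalue for $p=\tfrac12$ is $\tfrac{1+\sqrt5}{4}<1$. (Indeed, the right eigenvector test $u(0)=1$, $u(k)=k$ fails at the frozen type: $(1-p)N\neq N$.) So your key claim ``$\lambda_N=1$ exactly for every $N$'' is false for this truncation. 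Worse, the error-control step then collapses: because every type $k\geq 1$ feeds back into type $0$ at rate $(1-p)k$, and type $0$ spawns new clusters, the frozen process does \emph{not} agree with the true process on the low coordinates --- the undercounting of type-$0$ production by large clusters propagates to every coordinate. Either you run the frozen Markov dynamics (and then $e^{-t}\bracket{Z_t^{(N)},f}\to 0$, so the ``discrepancy'' carries the whole limit and is not controlled by the tail $\sum_{k>N}kZ_t(k)$), or you couple it to the true tree so that low coordinates agree (and then the type-$N$ individuals must reproduce at a rate depending on their hidden true sizes, so the truncated process is no longer an autonomous finite-type Markov branching process and Perron--Frobenius/Athreya does not apply).

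The paper's fix is to truncate differently: replace all clusters of size $>h$ by a single extra coordinate $N^{\geq}_t(h+1)=\sum_{k>h}kZ_t(k)$ counting their total number of \emph{vertices}, with each such over-weighted vertex reproducing at rate $1$ (a closed vertex at rate $1-p$, a new over-weighted vertex at rate $p$), and a cluster of size $h$ converting into $h+1$ over-weighted vertices at rate $ph$. This truncation is simultaneously an autonomous finite-type Markov branching process \emph{and} a deterministic functional of the true $Z_t$, so its coordinates $0,\dots,h$ are exactly $Z_t(0),\dots,Z_t(h)$; moreover the total vertex count $\bracket{Z_t^{(h)},g^{(h)}}$ is still the pure Yule process $N_t$, which forces $\lambda^{(h)}=1$ and identifies $\mathcal W\sim\operatorname{Exp}(1)$ (this last identification, and the extension to test functions with $|f(k)|\leq Ck$ via the tail $e^{-t}\bracket{Z_t,(\x{}\vee 1)\Ind{\cdot>h}}$, you do have correctly --- and note the paper obtains the a.s.\ tail control for free, as the difference of two a.s.-convergent quantities, rather than through moment ODEs, which by themselves would only give $L^1$/$L^2$ control of the tail uniformly in $t$, not almost sure control).
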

	Notice that $(Z_t)_{t \geq 0}$ is of infinite type, so the classical result of multi-type branching process does not apply directly and requires some adaptation. We will prove it in the next subsection.
	
	\subsection{Truncated branching processes}
	
	In this part, we give an elementary proof of Proposition~\ref{prop.LLN} using the truncation technique. We define the truncated vector at level $h \in \N$ that 
	\begin{equation}\label{eq.Truncation}
		\begin{split}
			Z_t^{(h)} &:= \Ll(Z_t(0), Z_t(1), \cdots, Z_t(h), N^{\geq}_t(h+1), 0, 0, \cdots \Rr),\\
			N^{\geq}_t(h+1) &:= \sum_{k \geq h+1} k Z_t(k).    
		\end{split}
	\end{equation}
	Thus, $Z_t^{(h)}$ is the vector tracking the numbers of clusters until the level $h$, and ${N^{\geq}_t(h+1)}$ as the total number of open vertices in the clusters above this level; see Figure~\ref{fig.overweight} for an illustration.

	An important observation is that the evolution of $(Z_t^{(h)})_{t \geq 0}$ is autonomous, which reduces the infinite-type branching process to a (finite) multi-type branching process.
	\begin{lemma}
		Let $\G^{(h)}_t := \sigma((Z_s^{(h)})_{0 \leq s \leq t})$, then the truncated process $(Z_t^{(h)})_{t \geq 0}$ is a multi-type Markov branching process in $(\Omega, \mcl F, (\G^{(h)}_t)_{t \geq 0}, \P)$ with the generator 
		
		\begin{equation}\label{eq.TransitionTruncation}
			\begin{split}
				\L^{(h)}Z^{(h)}_t & =\lim_{\Delta t \searrow 0} \frac{\E\Ll[Z^{(h)}_{t + \Delta t} - Z^{(h)}_t \, \vert \mcl G^{(h)}_t\Rr]}{\Delta t} \\
				& = Z_t(0)\Ll((1-p)\e_0 + p \e_1\Rr)  + \sum_{k=1}^{h-1} Z_t(k)\Ll((1-p)k\e_0 - p k \e_k + p k \e_{k+1} \Rr) \\
				& \qquad + Z_t(h)\Ll((1-p)h\e_0 - p h \e_h + p h(h+1) \e_{h+1} \Rr)\\
				& \qquad + N^{\geq}_t(h+1)\Ll((1-p)\e_0 + p \e_{h+1}\Rr).    
			\end{split}
		\end{equation}
	\end{lemma}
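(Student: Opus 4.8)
The plan is to verify that the truncated vector $Z_t^{(h)}$ evolves as a Markov process with the stated generator by checking that the right-hand side of \eqref{eq.TransitionTruncation} depends only on $Z_t^{(h)}$ itself (autonomy) and correctly encodes the infinitesimal dynamics inherited from $(Z_t)_{t\geq 0}$. The key point is that although $N^{\geq}_t(h+1) = \sum_{k\geq h+1} k Z_t(k)$ aggregates infinitely many coordinates of $Z_t$, its evolution is closed: whatever happens in the ``above-$h$'' regime, the quantity $\sum_{k\geq h+1} k Z_t(k)$ changes in a way that depends only on its own value, not on the detailed profile $(Z_t(k))_{k\geq h+1}$.

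First I would recall from Corollary~\ref{cor.Z} the transitions of $(Z_t)_{t\geq 0}$ and compute, for each possible elementary event, the corresponding change in $Z_t^{(h)}$. There are four cases to track, mirroring the four lines of \eqref{eq.TransitionTruncation}. (i) A type-$0$ individual (there are $Z_t(0)$ of them) branches at total rate $1$, producing a type-$0$ at rate $(1-p)$ and a type-$1$ at rate $p$; since $0,1\leq h$, this affects only the genuine coordinates, giving the term $Z_t(0)((1-p)\e_0+p\e_1)$. (ii) A type-$k$ individual with $1\leq k\leq h-1$ acts at rate $k$ overall: at rate $(1-p)k$ it emits a type-$0$, and at rate $pk$ it becomes type-$k+1$; as $k+1\leq h$, this lands inside the tracked coordinates, yielding $\sum_{k=1}^{h-1} Z_t(k)((1-p)k\e_0 - pk\e_k + pk\e_{k+1})$. (iii) A type-$h$ individual acts at rate $h$: at rate $(1-p)h$ it emits a type-$0$ (so $\e_0$ with weight $(1-p)h$ and $-\e_h$); at rate $ph$ it becomes type-$h+1$, which means a cluster of size $h$ leaves the tracked coordinates and enters $N^{\geq}$, contributing $h+1$ to $N^{\geq}_t(h+1)$ — hence the coefficient $ph(h+1)\e_{h+1}$ together with $-ph\e_h$. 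This is the only subtle bookkeeping step: a single type-$h\to h+1$ transition simultaneously decrements $Z_t(h)$ by one and increments $N^{\geq}_t(h+1)$ by $h+1$, because $N^{\geq}$ weights by cluster size.

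Next, (iv) the contribution of all clusters of size $\geq h+1$: a cluster of size $k\geq h+1$ has $k$ open vertices, each carrying its own rate-$1$ clock; so at total rate $(1-p)k$ one of its vertices spawns a closed vertex (adding $\e_0$, but the cluster itself stays above level $h$ and its size is unchanged, so $N^{\geq}$ does not move), and at total rate $pk$ the cluster grows from size $k$ to $k+1$, which increases $N^{\geq}_t(h+1)$ by exactly $1$. Summing over all such clusters: the $\e_0$ emissions occur at total rate $\sum_{k\geq h+1}(1-p)k Z_t(k) = (1-p)N^{\geq}_t(h+1)$, and the $\e_{h+1}$ increments at total rate $\sum_{k\geq h+1} pk Z_t(k) = p N^{\geq}_t(h+1)$. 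This is precisely the last line $N^{\geq}_t(h+1)((1-p)\e_0 + p\e_{h+1})$, and crucially it depends on $(Z_t(k))_{k\geq h+1}$ only through the single scalar $N^{\geq}_t(h+1)$, which is a coordinate of $Z_t^{(h)}$. Assembling (i)–(iv) gives \eqref{eq.TransitionTruncation}, and since every rate on the right-hand side is a (linear) function of the entries of $Z_t^{(h)}$ alone, the process $(Z_t^{(h)})_{t\geq 0}$ is Markov with respect to its own filtration $\G^{(h)}_t$ and is a multi-type branching process: each ``type'' $0,1,\dots,h$ and the aggregate type indexed by $h+1$ reproduces independently with type-dependent rates. (For the aggregate type $h+1$, the matching is that one unit of $N^{\geq}$ represents one open vertex sitting in some large cluster, which at rate $(1-p)$ emits a closed vertex — type $0$ — and at rate $p$ gives birth to another such unit — type $h+1$ again; this is exactly the offspring law of type $1$, consistent with the fact that every vertex in a large cluster behaves like a founding vertex of its own Yule subprocess.)

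The main obstacle is the size-weighted bookkeeping in case (iii): one has to be careful that the jump $Z_t(h)\to Z_t(h)-1$, $N^{\geq}_t(h+1)\to N^{\geq}_t(h+1)+(h+1)$ is a single atomic event (rate $ph$ per type-$h$ cluster), so that the net drift contribution is $ph Z_t(h)(-\e_h + (h+1)\e_{h+1})$ and not, say, $(h+1)$ separate unit increments — the latter would still give the same generator here but would misrepresent the jump structure. Once this is handled correctly and one observes that $N^{\geq}$ is stable under all relevant transitions (it is untouched by type-$0$, type-$k$ for $k<h$ staying below, and by closed-vertex emissions from large clusters; it changes by $+(h+1)$ from a type-$h$ promotion and by $+1$ from a large-cluster growth), autonomy and the Markov property follow immediately, and the branching structure is inherited coordinatewise from Definition~\ref{def.ContinuousFrag}.
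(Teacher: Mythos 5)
Your proposal is correct and follows essentially the same route as the paper: a direct verification of the generator from \eqref{eq.Transition2}, combined with the probabilistic reading that a type-$h$ cluster converts into $(h+1)$ over-weighted vertices at rate $ph$ and that each over-weighted vertex independently spawns a closed vertex at rate $(1-p)$ or another over-weighted vertex at rate $p$, so the dynamics close on $N^{\geq}_t(h+1)$ alone. Your extra care with the atomic jump $-\e_h + (h+1)\e_{h+1}$ is exactly the bookkeeping the paper's argument relies on.
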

	\begin{proof}
		One can check directly this generator by that of \eqref{eq.Transition2}. Here we give its probabilistic explanation. In the Markov branching process $(Z_t)_{t \geq 0}$, we treat the open clusters of size larger than $h$ as \textbf{over-weighted clusters}, and we use $N^{\geq}_t(h+1)$ to count the total number of the \textbf{over-weighted vertices} on them. Then, the transition law is stated as follows and illustrated in Figure~\ref{fig.overweight}:
		\begin{itemize}
			\item The transition law for the closed vertices, and the open clusters of size from $1$ to $(h-1)$ does not change.
			\item An open cluster of size $h$ has a rate $p h$ to become $(h+1)$ over-weighted vertices, and a rate $(1-p) h$ to produce a closed vertex.
			\item We do not have to consider the detailed sizes of the over-weighted clusters, but only count its production of vertices: every over-weighted vertex has rate $(1-p)$ to generate a closed vertex and $p$ to generate an over-weighted vertex.
		\end{itemize}
		This gives us the desired result. 
	\end{proof}
	
	
	\begin{figure}
		\centering
		\includegraphics[width=\linewidth]{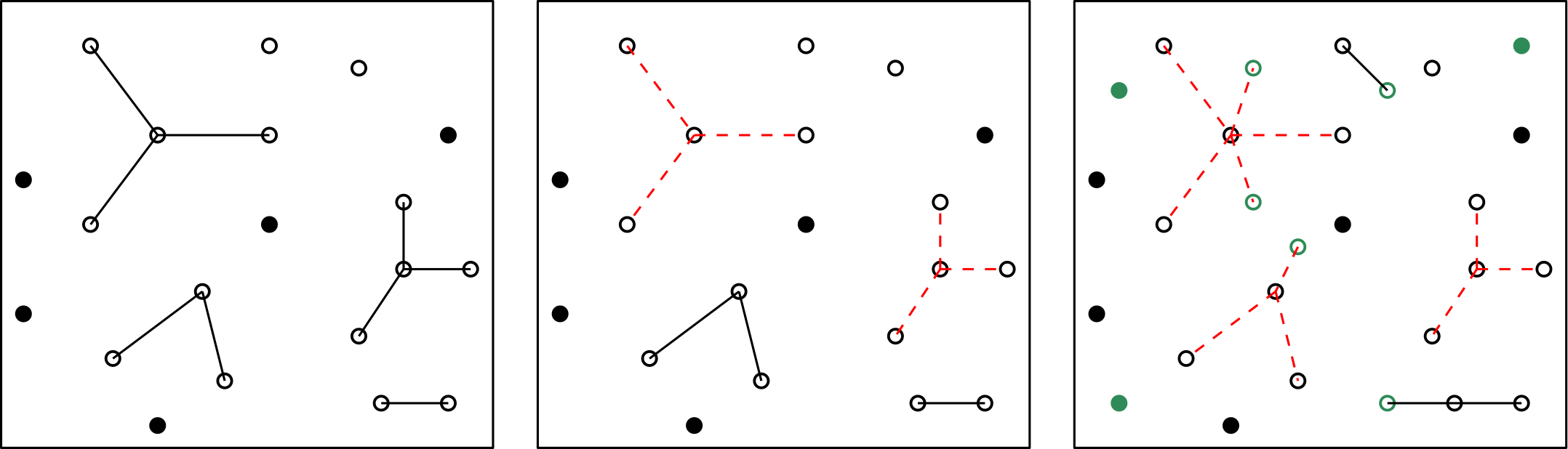}
		\caption{An illustration of the truncated process at the level $h = 3$. The sites in black are closed, while the transparent sites are open. The figure in the middle is the truncated version of the process in the figure on the left, where the clusters of size larger than $3$ become over-weighted and the dashed red line represents the deleted link there. In the figure on the right, more new vertices marked in green are attached following the dynamic of the truncated process. }\label{fig.overweight}
	\end{figure}

	We now make use of this truncated process to prove Proposition~\ref{prop.LLN}.
	\begin{proof}[Proof of Proposition~\ref{prop.LLN}]
		\textit{Step 1: law of large numbers for truncated process $Z_t^{(h)}$.}
		Since for every $h \in \N$, the truncated process $(Z_t^{(h)})_{t \geq 0}$ is a multi-type Markov branching process, the classical law of large numbers theory using Perron--Frobenius theorem applies; see the reference \cite[Theorem~1]{athreya1968some}. This suggests that for every test function $f$
		\begin{align}\label{eq.TruncateLLN}
			e^{- \lambda^{(h)} t} \bracket{Z_t^{(h)}, f} \xrightarrow{t \to \infty} \mathcal{W}^{(h)} \bracket{\nu_{p}^{(h)}, f}, \qquad \P\text{-a.s. and in } L^2,
		\end{align}
		where $\lambda^{(h)} > 0$ is the Malthusian exponent of the generator $\lambda^{(h)}$ defined in \eqref{eq.TransitionTruncation},  and $\nu_{p}^{(h)}$  is the associated eigenvector such that 
		\begin{align}\label{eq.eigenvectorTruncated}
			\L^{(h)} \nu_{p}^{(h)} = \lambda^{(h)} \nu_{p}^{(h)}. 
		\end{align}
		We also make a normalisation such that $\nu_{p}^{(h)}(0) = 1-p$, then $\nu_{p}^{(h)}$ is not necessarily a vector of measure, but the first coordinate does not depend on $h$. 
		Here $\mathcal{W}^{(h)}$ is a positive random variable. Because the vector $Z_t^{(h)}$ and limit $\nu_{p}^{(h)}$ are of compact support, we do not have to give specific conditions for the test function $f$ in \eqref{eq.TruncateLLN}. Moreover, we remark that the classical result in \cite[Theorem~1]{athreya1968some} implies the almost sure convergence, and the $L^2$ convergence comes from a uniform control of the moment. 
		
		\smallskip
		
		\textit{Step 2: finite dimensional convergence in $Z_t$.}
		We need to identify the Malthusian exponent $\lambda^{(h)}$, the limit random variable $\mathcal{W}^{(h)}$, and the left eigenvector $\nu_{p}^{(h)}$. We denote by $\x{} : \N \mapsto \R$ linear function that $\x{} (n) = n$, then it gives us an important functional that 
		\begin{align}
			N_t := \bracket{Z_t, \x{} \vee 1} = Z_t(0) + \sum_{k=1}^\infty k Z_t(k),
		\end{align}
		which is the total number of vertices at the moment $t$. In Definition~\ref{def.ContinuousFrag}, regardless of the type of vertices, every vertex produces a new vertex in exponential time and $(N_t)_{t \geq 0}$ is a Yule process. Then the classical limit result of Yule process (see \cite[Remark 5, Page 130]{AN2004}) applies that 
		\begin{align}\label{eq.TotalLLN}
			e^{-  t} \bracket{Z_t, \x{} \vee 1} =  e^{-  t} N_t \xrightarrow{t \to \infty} \mathcal{W}, \qquad \P\text{-a.s. and in } L^2,
		\end{align}
		where $\mathcal{W}$ is an exponential random variable. We introduce another test function $g^{(h)} : \N \mapsto \R$ as 
		\begin{align*}
			g^{(h)}(0) = 1, \qquad g^{(h)}(k) = k,  \qquad\forall 1 \leq k \leq h, \\
			g^{(h)}(h+1) = 1, \qquad g^{(h)}(j) = 0, \qquad \forall j > h+1,  
		\end{align*}
		Then $N_t$ can also be written as a functional of $Z_t^{(h)}$: 
		\begin{align}\label{eq.IdNt}
			N_t = Z_t(0) + \sum_{k=1}^\infty k Z_t(k) \stackrel{\eqref{eq.TransitionTruncation}}{=} Z_t(0) + \sum_{k=1}^h k Z_t(k) + N^{\geq}_t(h+1) = \bracket{Z_t^{(h)}, g^{(h)}}.
		\end{align}
		This identity together with \eqref{eq.TruncateLLN} suggests that $N_t$ has a Malthusian exponent $\lambda^{(h)}$. Comparing \eqref{eq.IdNt} with \eqref{eq.TotalLLN}, we identify that $\lambda^{(h)} = 1$.

		We then turn to the eigenvector $\nu_{p}^{(h)}$. Using the characterisation of the eigenvector \eqref{eq.eigenvectorTruncated} and $\lambda^{(h)} = 1$, we have 
		\begin{align}\label{eq.linearSystem}
			\Ll\{\begin{array}{lll}
				\nu_{p}^{(h)}(0) &= (1-p)\Ll(\nu_{p}^{(h)}(0) + \sum_{k=1}^{h} k \nu_{p}^{(h)}(k) + \nu_{p}^{(h)}(h+1)\Rr), &\\
				\nu_{p}^{(h)}(1) &= p \Ll(\nu_{p}^{(h)}(0) - \nu_{p}^{(h)}(1)\Rr), &\\
				\nu_{p}^{(h)}(k) &= p \Ll((k-1)\nu_{p}^{(h)}(k-1) - k \nu_{p}^{(h)}(k)\Rr), &\forall 1 \leq k \leq h, \\
				\nu_{p}^{(h)}(h+1) &= p \Ll(h(h+1) \nu_{p}^{(h)}(h) + \nu_{p}^{(h)}(h+1)\Rr), &\\
				\nu_{p}^{(h)}(j) &= 0, &\forall j > h+1.
			\end{array}\Rr.
		\end{align} 
		This liner system gives an iteration
		\begin{align}\label{eq.NuIteration1}
			\nu_{p}^{(h)}(1) = \Ll(\frac{1}{1+\frac{1}{p}}\Rr) \nu_{p}^{(h)}(0), \qquad \nu_{p}^{(h)}(k+1) = \Ll(\frac{k}{k+1+\frac{1}{p}}\Rr) \nu_{p}^{(h)}(k), \quad 1 \leq k \leq h-1. 
		\end{align}
		As we fix $\nu_{p}^{(h)}(0) = 1-p$, the iteration \eqref{eq.NuIteration1} gives us the result $\nu_{p}^{(h)}(k) = c_{p} B\Ll(1 + \frac{1}{p}, k\Rr)$ for all $1 \leq k \leq h$. Then the following limit exists
		\begin{align}\label{eq.NuIteration2}
			\nu_{p}(k) := \lim_{h \to \infty} \nu_{p}^{(h)}(k),
		\end{align}
		because the value of $\nu_{p}^{(h)}(k)$ is constant from $h \geq k$.
		
		Finally, we identify the random variable $\mathcal{W}^{(h)}$. We make use of this exponent and test \eqref{eq.TruncateLLN} with $\Ind{\cdot = 0}$,  
		\begin{align*}
			\mathcal{W}^{(h)} \nu_{p}^{(h)}(0) = \lim_{t \to \infty} e^{- \lambda^{(h)} t} \bracket{Z_t^{(h)}, \Ind{\cdot = 0}} = \lim_{t \to \infty} e^{-  t} Z_t(0).
		\end{align*}
		Recall that we fix $\nu_{p}^{(h)}(0) = (1-p)$ for all $h \in \N$, thus $\mathcal{W}^{(h)}$  does not depend on $h$, 
		and we denote by $\mathcal{W}^{(h)} = \mathcal{W}'$. 		We pick a very special case that $h=0$, where $Z_t^{(0)}(0)$ and $Z_t^{(0)}(1)$ count respectively the closed vertices and the open vertices, so we have $\nu_{p}^{(0)}(0)= 1-p$ and $\nu_{p}^{(0)}(1) = p$ from \eqref{eq.linearSystem}. Then we test \eqref{eq.TruncateLLN} with $\Ind{\cdot = 1}$ and obtain 
		\begin{align*}
			\lim_{t \to \infty} e^{-t} Z^{(0)}_t(1) =  \mathcal{W}' \nu_{p}^{(0)}(1) = \mathcal{W}'p.
		\end{align*}
		We combine the identity \eqref{eq.IdNt} and \eqref{eq.TotalLLN}
		\begin{align}\label{eq.WW}
			\mathcal{W} = \lim_{t \to \infty}   e^{-  t} N_t = \lim_{t \to \infty}  e^{-t} \bracket{Z_t^{(0)}, g^{(0)}} = \mathcal{W}'(\nu_{p}^{(0)}(0) + \nu_{p}^{(0)}(1)) = \mathcal{W}',
		\end{align}
		which implies $\mathcal{W}'=\mathcal{W}$.
		
		Therefore, we have identified all the quantities in \eqref{eq.TotalLLN}. We now test it with the indicator function $\Ind{\cdot = k}$ and obtain	
		\begin{align}\label{eq.LLNFinite}
			\lim_{t \to \infty} e^{-t} Z_t(k) = \lim_{h \to \infty} \lim_{t \to \infty} e^{-t} Z_t^{(h)}(k) = \lim_{h \to \infty} \mathcal{W}' \nu_{p}^{(h)}(k) = \mathcal{W}\nu_{p}(k).
		\end{align}

		\smallskip
		
		\textit{Step 3: convergence for general test functions.} In this part, we need to extend the result to a general test function. The idea is similar to the dominated convergence theorem, that the point-wise convergence and the convergence of a dominated functional will result in our desired result. 
		
		For a general function $f$ with ${\sup_{n \in \N_+} \frac{\vert f(n) \vert}{n} < \infty}$, we make a decomposition that 
		\begin{align*}
			f = f^{(h)}_\leq + f^{(h)}_>, \qquad f^{(h)}_\leq (x) := f(x) \Ind{x \leq h}, \qquad f^{(h)}_> (x) := f(x) \Ind{x > h}.
		\end{align*}
		Then we can make use of truncation that
		\begin{equation}\label{eq.General}
			\begin{split}
				&\Ll\vert e^{-t} \bracket{Z_t, f} -  \mathcal{W} \bracket{\nu_p, f} \Rr\vert\\
				&\leq \Ll\vert e^{-t} \bracket{Z_t, f^{(h)}_\leq} -   \mathcal{W} \bracket{\nu_p, f^{(h)}_\leq} \Rr\vert + \Ll\vert e^{-t} \bracket{Z_t, f^{(h)}_>} -   \mathcal{W} \bracket{\pi_p, f^{(h)}_>} \Rr\vert\\
				&\leq \Ll\vert e^{-t} \bracket{Z_t, f^{(h)}_\leq} -   \mathcal{W} \bracket{\nu_p, f^{(h)}_\leq} \Rr\vert + \Ll(\sup_{n} \frac{\vert f(n)\vert }{n}\Rr)\Big(\Ll\vert e^{-t} \bracket{Z_t, (\x{} \vee 1) \Ind{\cdot > h}}\Rr\vert + \Ll\vert   \mathcal{W} \bracket{ \nu_p, (\x{} \vee 1) \Ind{\cdot > h}}\Rr\vert\Big)\\
			\end{split}    
		\end{equation}
		We focus on the almost surely convergence and the $L^2$-convergence can be done similarly.
		
		For its first term, we have
		\begin{align*}
			\Ll\vert e^{-t} \bracket{Z^{(h)}_t, f^{(h)}_\leq} - \mathcal{W} \bracket{\nu^{(h)}_p, f^{(h)}_\leq} \Rr\vert \xrightarrow[a.s.]{t \to \infty} 0,
		\end{align*}
		by the finite dimensional convergence from \eqref{eq.TruncateLLN} and \eqref{eq.LLNFinite}. 
		
		For its second term, since \eqref{eq.TotalLLN}  and \eqref{eq.TruncateLLN} give us
		\begin{align*}
			e^{-t} \bracket{Z_t, (\x{} \vee 1) \Ind{\cdot \leq h}} &\xrightarrow[a.s.]{t \to \infty}   \mathcal{W}  \bracket{ \nu_p,  (\x{} \vee 1) \Ind{\cdot \leq h}}, \\
			e^{-t} \bracket{Z_t, (\x{} \vee 1)} &\xrightarrow[a.s.]{t \to \infty}   \mathcal{W}  \bracket{ \nu_p, (\x{} \vee 1)}, 
		\end{align*}
		we make the difference and obtain
		\begin{align*}
			e^{-t} \bracket{Z_t, (\x{} \vee 1) \Ind{\cdot > h}} \xrightarrow[a.s.]{t \to \infty}   \mathcal{W}  \bracket{ \nu_p, (\x{} \vee 1)\Ind{\cdot > h}}.
		\end{align*}
		We put these estimates back to \eqref{eq.General} and obtain 
		\begin{align*}
			\limsup_{t \to \infty}\Ll\vert e^{-t} \bracket{Z_t, f} -   \mathcal{W} \bracket{\nu_p, f} \Rr\vert \leq 2\Ll(\sup_{n} \frac{\vert f(n)\vert }{n}\Rr)   \mathcal{W}  \bracket{ \nu_p, (\x{} \vee 1)\Ind{\cdot > h}}, \qquad \P\text{-a.s.} .
		\end{align*}
		Then we let $h \to \infty$ and conclude the convergence for general test function.
		
	\end{proof}
	
	\subsection{Proof of Theorem~\ref{thm.main1}} 
	We now use Proposition~\ref{prop.LLN} to prove the main theorem.
	\begin{proof}[Proof of Theorem~\ref{thm.main1}]
		Recall that \eqref{eqn:xz} entails
		\begin{align*}
			\bracket{X_n, f} = \bracket{Z_{\tau_n}, f}.
		\end{align*} 
		Using the definition of $\tau_n$ in \eqref{eq.stopingTime} and the identity \eqref{eq.TotalLLN}, we also have 
		\begin{align*}
			n = \bracket{X_n, [x]\vee 1} = \bracket{Z_{\tau_n}, [x]\vee 1}.
		\end{align*}
		Therefore, concerning the almost sure convergence, Proposition~\ref{prop.LLN} applies
		\begin{align}\label{eq.main1as}
			\frac{\bracket{X_n, f}}{n} = \frac{e^{- \tau_n} \bracket{Z_{\tau_n}, f}}{e^{- \tau_n}  \bracket{Z_{\tau_n}, [x]\vee 1}} \xrightarrow{n \to \infty} \frac{\mathcal{W} \bracket{\nu_p, f}}{\mathcal{W} \bracket{\nu_p, [x]\vee 1}} = \bracket{\nu_p, f}, \qquad \P\text{-a.s.} .
		\end{align} 
		Here we also use the normalisation that $\bracket{\nu_p, [x]\vee 1} = 1$ in the statement of Theorem~\ref{thm.main1}.
		
		Concerning the $L^q$ convergence, we notice that 
		\begin{align*}
			\vert \bracket{X_n, f} \vert \leq \Ll(\sup_{k \in \N} \frac{\vert f(k) \vert}{k}\Rr) \bracket{X_n, [x]\vee 1} = \Ll(\sup_{k \in \N} \frac{\vert f(k) \vert}{k}\Rr) n.
		\end{align*}
		Here we apply the identity \eqref{eq.IdNt} and the definition \eqref{eq.stopingTime} once again. This implies that 
		\begin{align*}
			\Ll\vert \frac{\bracket{X_n, f}}{n} \Rr\vert \leq \Ll(\sup_{k \in \N} \frac{\vert f(k) \vert}{k}\Rr).
		\end{align*}
		Viewing this bound and \eqref{eq.main1as}, we can apply the dominated convergence theorem to deduce the $L^q$ convergence. 
	\end{proof}
	
	\section{Yule--Simon distribution via  
		bond-percolation on RRT}\label{sec.bond}
	We give another proof of Theorem~\ref{thm.main1} using the coupling between the site-percolation and the bond-percolation on RRTs.
	We start from the justification of Lemma~\ref{lem.site_bond_coupling}. 
	
	\begin{proof}[Proof of Lemma~\ref{lem.site_bond_coupling}]
		Note that a vertex is either closed or open in $T^\omega_n$ as determined by $\omega$. In the bond-percolation $\widetilde{T^\omega_n}$, we just remove the link between the closed vertex and its parent. To obtain the site-percolation defined in Definition~\ref{def.Canonical}, we need to further remove the link between the closed vertices and their children vertices. Notice that every closed vertex is the root of its associated cluster in $\widetilde{T^\omega_n}$, then the operation just follows the description in  Lemma~\ref{lem.site_bond_coupling}. Moreover, Every bond in $\widetilde{T^\omega_n}$ is associated to one unique child vertex, so it has Bernoulli probability $p$ to remain independently under $(\Omega, \mcl F, \P)$. This justifies that the $\widetilde{T^\omega_n}$ has a law of Bernoulli percolation on RRT.
	\end{proof}

	The second proof of Theorem~\ref{thm.main1} relies on the following two observations. The first one is a similar result of Theorem~\ref{thm.main1} for the site-percolation: for every $k \in \N_+$, we denote by 	
	\begin{align}\label{eq.defY}
		Y_n(k) := \#\{\text{the open clusters of size } k \text{ in } \widetilde{T_n^\omega}\}.
	\end{align}
	Then for $\tilde{c}_p= \frac{1}{p} - 1$, we have 
	\begin{align}\label{eq.YuleSimonY}
		\frac{Y_n(k)}{n} \xrightarrow{n \to \infty} \tilde{c}_p B\left(1+\frac{1}{p}, k\right), \qquad \forall k \in \N_+, \qquad \P\text{-a.s.} .
	\end{align}
	This was obtained in Simon's model when he studied the frequency of words in \cite{simon1955class}; For a more recent reference, see its proof and related discussion in \cite[Section~5.3]{bertoin2022}.

	The second observation concerns the coupling in Lemma~\ref{lem.site_bond_coupling} and we are interested in the size distribution of the subtrees when removing the root of an RRT. This is related to the celebrated Ewens sampling formula (\cite[(1.3)]{arratia2003logarithmic}): given an RRT of size $(k+1)$ and we remove its root, and denote by $C_k(j)$ the number of clusters of size $j$. Then we have 
	\begin{align}\label{eq.Ewens}
		\P\Ll[C_k(1)=a_1, C_k(2)=a_2,\cdots, C_k(k)=a_{k}\Rr]={\bf 1}_{\sum_{j=1}^{k}j a_j=k}\prod_{j=1}^{k}\frac{1}{j^{a_j}a_j!}.
	\end{align}
	Moreover, we can calculate its mean and second moment (see \cite[Lemma 1.1]{arratia2003logarithmic}) 
	\begin{align}\label{eq.EwensMean}
		\E[C_k(j)]=\frac{1}{j},\qquad \E\Ll[C_k(j)^2\Rr]=\frac{1}{j} + \frac{1}{j^2}, \qquad  \forall 1\leq j\leq k.
	\end{align}
	
	\begin{proof}[Second proof of Theorem~\ref{thm.main1}]
		Now we apply these two observations in our model. 
		
		\textit{Step~1: site-bond-percolation coupling.} We implement the root-isolation defined in Lemma~\ref{lem.site_bond_coupling} for all the clusters in the bond-percolation $\widetilde{T^\omega_n}$ to yield the site-percolation (with special consideration to the cluster containing vertex 1). Following the definition of $Y_n(k+1)$ in \eqref{eq.defY}, for every $k \in \N_+$, we give an arbitrary order to all the clusters of size $(k+1)$ in the bond-percolation $\widetilde{T^\omega_n}$. Then we  denote by $C_k^{(i)} = \Ll(C_k^{(i)}(j) \Rr)_{1 \leq j \leq k}$ as the size vector appearing in \eqref{eq.Ewens} for the $i$-th cluster of size $(k+1)$. We claim that 
		\begin{equation}\label{eq.XYLimit}
			\begin{split}
				\lim_{n \to \infty} \frac{X_n(j)}{n} &= \lim_{n \to \infty} \frac{1}{n}\sum_{k=j}^\infty \sum_{i=1}^{Y_n(k+1)} C^{(i)}_{k}(j), \qquad \forall j \in \N_+, \\
				\lim_{n \to \infty} \frac{X_n(0)}{n} &= \lim_{n \to \infty} \frac{\sum_{k=1}^\infty Y_n(k)}{n}.
			\end{split}
		\end{equation}
		Roughly speaking, the right-hand side of the first equation just counts the number of clusters  of size $j$ after isolating the root vertices in the bond-percolation.   Recall that to obtain site-percolation from bond-percolation, we only remove the link between the vertex $1$ and its children vertices when vertex 1 is closed (i.e.\ $\omega(1)=0$), see Lemma~\ref{lem.site_bond_coupling}. Then the first equation holds even without the limit operator if the vertex $1$ is closed. 
		In the case that vertex $1$ is open (i.e.\ $\omega(1)=1$), as the typical size of the cluster containing the vertex $1$ is of order $n^p$ (see \cite[Theorem~3.1.(i)]{baur2015fragmentation}) and negligible compared to $n$, the first equation holds with the limit operator. 
		
		It remains to prove the convergence of the right-hand side in \eqref{eq.XYLimit}. We just focus on the almost sure convergence for one coordinate $k \in \N_+$ as in Step~2 of the proof of Proposition~\ref{prop.LLN}, because the other results can be deduced similarly. Inspired by the mean-variance decomposition (see \eqref{eq.EwensMean})
		\begin{align*}
			C^{(i)}_{k}(j) = \underbrace{\frac{1}{j}}_{\text{``mean"}} + \underbrace{\Ll(C^{(i)}_{k}(j) - \frac{1}{j}\Rr)}_{\text{``variance"}},
		\end{align*}
		we decompose the equation into two terms 
		\begin{align*}
			\frac{1}{n}\sum_{k=j}^\infty \sum_{i=1}^{Y_n(k+1)} C^{(i)}_{k}(j) &= \frac{1}{n} \sum_{k=j}^\infty \sum_{i=1}^{Y_n(k+1)} \frac{1}{j} + R_n(j)\\
			R_n(j) &:=  \frac{1}{n} \sum_{k=j}^\infty \sum_{i=1}^{Y_n(k+1)} \Ll(C^{(i)}_{k}(j) - \frac{1}{j}\Rr),
		\end{align*}
		where $R_n(j)$ is the remainder. We treat the convergence of the two terms separately.
		
		\smallskip
		
		\textit{Step~2: convergence of mean.} For the first term, the heuristic argument is to put the limit from \eqref{eq.YuleSimonY} that 
		\begin{align}\label{eq.XYheuristic}
			\lim_{n \to \infty}  \sum_{k=j}^\infty  \frac{Y_n(k+1)}{n} \frac{1}{j} &=  \tilde{c}_p\sum_{k= j}^{\infty}B\left(1+\frac{1}{p}, k+1\right)\frac{1}{j} \qquad \P\text{-a.s.}.
		\end{align}
		The exchange between the limit and the sum requires some more careful justification, which can be illustrated by the following argument: because the total number of vertices is $n$, we have $\sum_{k=1}^\infty kY_n(k) = n$, and the Markov inequality implies
		\begin{align}\label{eq.YTrivialBound}
			\sum_{k=M+1}^\infty Y_n(k+1) \leq \frac{n}{M}.
		\end{align}
		Then we obtain
		\begin{align*}
			\lim_{n \to \infty}  \sum_{k=j}^M  \frac{Y_n(k+1)}{n} \frac{1}{j} \leq  \lim_{n \to \infty}  \sum_{k=j}^\infty  \frac{Y_n(k+1)}{n} \frac{1}{j} \leq \lim_{n \to \infty}  \sum_{k=j}^M  \frac{Y_n(k+1)}{n} \frac{1}{j} + \frac{1}{M j}.
		\end{align*}
		We can send $n$ to infinity at first, and then let $M$ be arbitrarily large to establish \eqref{eq.XYheuristic}. 
		
		We calculate the value for the right-hand side of \eqref{eq.XYheuristic}
		\begin{align}
			\begin{split}\label{eqn.whyyulesimon}			\tilde{c}_p\sum_{k= j}^{\infty}B\left(1+\frac{1}{p}, k+1\right)\frac{1}{j} &= \frac{1}{j} \tilde{c}_p\sum_{k= j}^{\infty} \int_0^1 x^{\frac{1}{p}} (1-x)^{k} \, \d x \\
				&= \frac{1}{j} \tilde{c}_p \int_0^1 x^{\frac{1}{p}} \Ll(\sum_{k= j}^{\infty}(1-x)^{k}\Rr) \, \d x \\
				&= \frac{1}{j} \tilde{c}_p \int_0^1 x^{\frac{1}{p}-1} (1-x)^{j} \, \d x\\
				&= \frac{1}{j} \tilde{c}_p B\left(\frac{1}{p}, j+1\right) = c_p B\left(1+\frac{1}{p}, j\right).
			\end{split}
		\end{align}
		Here from the first line to the second line, we use the monotone convergence theorem. From the third line to the fourth line, we just use the definition of the Beta function. The equality in the fourth line comes from the identity that $\Gamma(\frac{1}{p})\Gamma(j+1) = pj\Gamma(1+\frac{1}{p})\Gamma(j)$. Recall that $c_p = 1-p$ defined in \eqref{eq.nu}, and $\tilde{c}_p = 1/p - 1$ defined around \eqref{eq.YuleSimonY}. 
		
		\smallskip
		
		\textit{Step~3: convergence of the fluctuation.}
		We study the second moment of the remainder term $R_n(j)$
		\begin{align*}
			\E[(R_n(j))^2] &= \E\Ll[\Ll(\frac{1}{n} \sum_{k=j}^\infty \sum_{i=1}^{Y_n(k+1)} \Ll(C^{(i)}_{k}(j) - \frac{1}{j}\Rr)\Rr)^2\Rr]\\
			&=  \E\Ll[\E\Ll[\Ll(\frac{1}{n} \sum_{k=j}^\infty \sum_{i=1}^{Y_n(k+1)} \Ll(C^{(i)}_{k}(j) - \frac{1}{j}\Rr)\Rr)^2 \Big \vert (Y_n(k+1))_{k \in \N_+}\Rr]\Rr]\\
			&= \E\Ll[ \sum_{k=j}^\infty \frac{Y_n(k)}{n^2} \var[C_{k}(j)]\Rr]\\
			&=  \E\Ll[ \sum_{k=j}^\infty \frac{Y_n(k)}{n^2} \frac{1}{j}\Rr]\\
			&\leq \frac{1}{nj^2}.
		\end{align*}
		Here the argument for the first line to the third line is the following: by Lemma~\ref{lem.RRTiid}, conditioned on the size, every cluster in the site-percolation also follows the law of RRT.  Therefore, conditioned on $Y_n(k+1)$,
		the variables $\{C^{(i)}_{k}(j)\}_{1 \leq i \leq Y_n(k+1)}$ are i.i.d., then \eqref{eq.EwensMean} applies. 
		From the fourth line to the fifth line, we use the bound $\sum_{k=j}^\infty Y_n(k)$ as in \eqref{eq.YTrivialBound}. This implies that ${R_n(j) \xrightarrow[n \to \infty]{\P} 0}$ and ${R_{N^2}(j) \xrightarrow[N \to \infty]{a.s.} 0}$ for the subsequence $(N^2)_{N\in\N_+}$ by Borel--Cantelli lemma. 
		
		All the analysis above results in $\frac{ X_{N^2}(j)}{N^2} \xrightarrow[N \to \infty]{a.s.} c_p B\left(1+\frac{1}{p}, j\right)$. Then for general ${n \in [N^2, (N+1)^2)}$, we observe that $\vert  X_{N^2}(j) -  X_{n}(j)\vert \leq 2 (n - N^2) \leq 4 (\sqrt{n}+1)$ from \eqref{eq.Transition1}. Therefore, we have 
		\begin{align*}
			\lim_{n \to \infty} \frac{ X_{n}(j)}{n} = 	\lim_{n \to \infty} \left(\frac{ X_{n}(j) -  X_{N^2}(j)}{n} + \frac{ X_{N^2}(j)}{N^2} \frac{N^2}{n}\right) =   c_p B\left(1+\frac{1}{p}, j\right), \quad \P\text{-a.s.} .
		\end{align*}
		This gives us the desired result. 
	\end{proof}
	\begin{remark}
		For the bond-percolation of a large RRT, a ``typical" (randomly chosen) cluster has size following asymptotically the Yule--Simon distribution with parameter $\frac{1}{p}$ (see \eqref{eq.YuleSimonY}). The main idea of this proof is to use this fact and the site-bond percolation coupling to deduce the number of clusters of a fixed size after the site-percolation. The involves finding the expectation of the number of clusters of a fixed size after removing the root of a typical cluster and applying the strong law of large numbers. The Yule--Simon distribution arises in the site-percolation due to the computations in \eqref{eqn.whyyulesimon}.
	\end{remark}

	\section{Scaling limit of the largest cluster sizes}\label{sec.W}
	\subsection{Characterisation of the limit random variables}
	In this subsection, we will prove firstly a weaker version of Theorem~\ref{thm.main2}, which is the convergence of $n^{-p} \vert \Pi^{(n)}_i \vert $ for any given $i$. Our proof relies on the similar result of the bond-percolation on RRT and the coupling in Lemma~\ref{lem.site_bond_coupling}. Let $\widetilde{\Pi} = (\widetilde{\Pi}_i)_{i \in \N_+}$ be a partition of $\N_+$ by the clusters from the bond-percolation $\widetilde{T_\infty^{\omega}}$ defined in Lemma~\ref{lem.site_bond_coupling}, where $\widetilde{\Pi}_i$ contains the $i$-th smallest root vertex, and let $\widetilde{\Pi}^{(n)}_i := \widetilde{\Pi}_i \cap [n]$ be the restriction on $\widetilde{T_n^{\omega}}$. The following result states the convergence of $n^{-p} \vert \widetilde{\Pi}^{(n)}_i \vert$, which was proved in \cite[Theorem~3.1.(i), Lemma~3.3, Corollary]{baur2015fragmentation} and can also be found explicitly in \cite[Proposition~2.15]{guerin2023fixed}.
	
	\begin{proposition}{\cite{baur2015fragmentation,guerin2023fixed}}\label{prop.BondClusterLimit}
		Under the canonical coupling space $(\Omega, \mcl F, \P)$, there exists a random variable $\widetilde{W} = (\widetilde{W}_i)_{i \in \N_+} \in \ell^{q}$ such that 
		\begin{align}\label{eq.BondClusterLimit}
			n^{-p} \vert \widetilde{\Pi}^{(n)}_i \vert \xrightarrow[\text{a.s.}]{n \to \infty} \widetilde{W}_i, \qquad \forall i \in \N_+.
		\end{align}
		Moreover, the random variable $\widetilde{W}$ has the following characterisation:
		\begin{itemize}
			\item $\widetilde{W}_1$ is a Mittag--Leffler random variable with parameter $p$.
			\item For every $i \geq 2$, the variable $\widetilde{W}_i \stackrel{d}{=} \widetilde{W}_{1,i} (\beta_{\sigma_i})^p$, where
			\begin{itemize}
				\item $\widetilde{W}_{1,i}$ is a Mittag--Leffler random variable with parameter $p$;
				\item $(\beta_k)_{k \in \N_+}$ are independent Beta random variables with parameters $(1, k-1)$;
				\item $\sigma_{j} - 1$ are dependent negative binomial random variables with parameters $(j-1, 1-a)$, such that $\sigma_j = \sigma_{j-1} + G_j$ and $G_j$ ($\geq 1$) has geometric distribution of parameter $(1-p)$
				and is independent of $\sigma_{j-1}$;
				\item the random variable $\widetilde{W}_{1,i}$ is independent of $\beta_{\sigma_i}$.
			\end{itemize}
		\end{itemize}
	\end{proposition}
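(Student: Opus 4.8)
This statement is explicitly attributed to \cite{baur2015fragmentation} and \cite{guerin2023fixed}, so in principle one could simply cite it; but since we want the paper to be reasonably self-contained, let me sketch the route one would take to prove it from scratch in the canonical coupling. The plan is to exploit the Yule embedding of the bond-percolation: run the infinite RRT $\mcl T_t$ as a Yule process, and colour each edge open (with probability $p$) or closed (with probability $1-p$) at the birth time of its child endpoint. After thinning, the open sub-forest rooted at vertex $1$ grows like a Yule process of rate $p$, while closed edges cut off subtrees that themselves grow as Yule processes of rate $1$. More precisely, the cluster $\widetilde\Pi_1$ containing the root evolves, in continuous time, as a pure-birth process in which vertices are added at rate $p\,|\widetilde\Pi_1|$; by the classical Yule limit (\cite[Remark~5, p.~130]{AN2004}), $e^{-pt}|V_t(\widetilde\Pi_1)|$ converges a.s.\ and in $L^2$ to a random variable, and the total tree size satisfies $e^{-t}|V(\mcl T_t)| \to \mcl W \sim \operatorname{Exp}(1)$. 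Writing $\tau_n$ for the hitting time of total size $n$ (so $\tau_n = \tfrac1n\log n + O(1)$ after rescaling by $\mcl W$), one gets $n^{-p}|\widetilde\Pi_1^{(n)}| = \big(e^{-p\tau_n}|V_{\tau_n}(\widetilde\Pi_1)|\big)\big(e^{-\tau_n}|V_{\tau_n}(\mcl T_{\tau_n})|\big)^{-p} \cdot (\text{const})\to \widetilde W_1$ a.s., and the Mittag--Leffler law of $\widetilde W_1$ comes from identifying the martingale limit of a Yule process of index $p$ embedded in one of index $1$ (this is where the Mittag--Leffler distribution with parameter $p$ classically arises; see also the discussion in \cite{baur2015fragmentation}).

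For $i\ge 2$ the idea is to decompose $\widetilde\Pi_i$ at the moment it is ``born'', i.e.\ at the first time a closed edge creates the cluster with the $i$-th smallest root. At that time the cluster consists of a single vertex — the root $r_i$ — and from then on it grows as an independent Yule-$p$ process, contributing the factor $\widetilde W_{1,i}$ (another independent Mittag--Leffler variable, independent of what happened before its birth). The remaining factor $(\beta_{\sigma_i})^p$ records the fraction of the \emph{total} growth captured by the sub-Yule-process hanging below $r_i$: the root $r_i$ is itself the child of some vertex in an earlier cluster, and the relevant combinatorics is that of the Chinese-restaurant/Ewens structure generated by the recursive insertion of roots. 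Concretely, $\sigma_i$ is the label (in birth order among \emph{roots}) at which the $i$-th root appears, and the geometric gaps $G_j$ with parameter $1-p$ count how many \emph{open} vertices are inserted between consecutive root-births (each new vertex is a fresh root precisely when its connecting edge is closed, which has probability $1-p$); the Beta$(1,k-1)$ factor $\beta_k$ is the standard size-biased splitting of mass when the $k$-th individual is inserted into a RRT (a Pólya-urn / GEM computation). Multiplying the independent Yule limit $\widetilde W_{1,i}$ by this mass fraction raised to the power $p$ (because the cluster size scales like (mass)$^p$) gives the claimed identity in law.

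The membership $\widetilde W\in\ell^q$ for $q\in(\tfrac1p,\infty]$ follows from the tail estimate on $\widetilde W_i$: since $\widetilde W_i \approx \widetilde W_{1,i}(\beta_{\sigma_i})^p$ and $\mathbb E[\beta_k^{p}] \asymp k^{-p}$ while $\sigma_i \asymp i$, one gets $\mathbb E[\widetilde W_i^{q}] \lesssim i^{-pq}$, which is summable exactly when $pq>1$; the $L^q$ (indeed a.s.) control then comes from Doob/maximal inequalities applied to the family of martingales $e^{-pt}|V_t(\widetilde\Pi_i)|$ uniformly in $i$. The main obstacle, and the part that requires genuine care rather than bookkeeping, is making the decomposition at the birth time of the $i$-th root rigorous: one must check that conditionally on the configuration up to that (stopping) time, the future growth of $\widetilde\Pi_i$ is an independent Yule-$p$ process started from one vertex, and simultaneously track how the \emph{total} population splits, so that the two rescalings $e^{-p\tau_n}$ and $e^{-\tau_n}$ are compatible in the ratio — i.e.\ that the same random normalisation $\mcl W$ governing the whole tree also appears in the denominator for every cluster. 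Once that branching/strong-Markov argument is set up cleanly, the identification of the laws is the Ewens/GEM computation sketched above, and the $\ell^q$ bound is a routine moment estimate. Since all of this is carried out in detail in \cite{baur2015fragmentation} and \cite{guerin2023fixed}, we do not reproduce it here and refer the reader to those works.
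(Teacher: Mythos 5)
Your proposal matches the paper's treatment: the paper likewise does not prove this proposition but cites \cite{baur2015fragmentation,guerin2023fixed} and records, in a remark, exactly the Yule-embedding sketch you give (the root cluster grows as a Yule process of rate $p$ embedded in a rate-$1$ Yule process, yielding the Mittag--Leffler limit, while the subtree rooted at the $i$-th root vertex contributes the factor $(\beta_{\sigma_i})^p$ with the geometric gaps between root births). The only slip is the throwaway asymptotic ``$\tau_n=\tfrac1n\log n+O(1)$'', which should read $\tau_n=\log n+O(1)$; this does not affect the argument.
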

	\begin{remark}
		We do not repeat the proof as it is already given in the previous reference. We highlight the idea which can be useful in the following paragraphs. Generally, using the branching embedding in Definition~\ref{def.ContinuousFrag}, the size of the cluster connected to the root is a Yule process of rate $p$, thus one obtains the characterisation of Mittag--Leffler distribution of $\widetilde{W}_{1}$ (see \cite[Theorem~3.1.(i)]{baur2015fragmentation}). We denote by $\widetilde{T_\infty^{i, \omega}}$ the subtree rooted at $i$ 
		\begin{align}\label{eq.defSubtree}
			\widetilde{T_\infty^{i, \omega}} := \{v \in \N_+: v \geq i  \text{ and is connected to } i \text{ on } \widetilde{T_\infty^{\omega}}\},
		\end{align}
		with $\widetilde{T_n^{i, \omega}}$ as its restriction on $[n]$. Then for general  $i \in \N_+$, the process $(\vert \widetilde{T_n^{i, \omega}}\vert)_{n \in \N_+}$ can be analysed similarly as $(\vert \widetilde{T_n^{1, \omega}}\vert)_{n \in \N_+}$ and we obtain(see \cite[Lemma~3.3]{baur2015fragmentation})
		\begin{align}\label{eq.SubtreeLimit}
			n^{-p} \vert \widetilde{T_n^{i, \omega}} \vert \xrightarrow[\text{a.s.}]{n \to \infty} \widetilde{W}'_{1,i} (\beta_{i})^p, \qquad \forall i \in \N_+.
		\end{align}
		Here $\widetilde{W}'_{1,i} \stackrel{d}{=} \widetilde{W}_{1}$ and $\beta_i$ is a Beta random variable  with parameters $(1, i-1)$. 
		We also notice the difference between the subtrees $(\widetilde{T_\infty^{i, \omega}})_{i \in \N_+}$ and the partition $(\widetilde{\Pi}^{(n)}_j)_{j \in \N_+}$. In particular, $\widetilde{T_\infty^{i, \omega}}$ is a cluster in the bond-percolation $\widetilde{T_\infty^{\omega}}$ if and only if its root $i$ is not connected to its parent, i.e. $\omega(i) = 0$ in our coupling from Lemma~\ref{lem.site_bond_coupling}. This explains where the random variables $(\sigma_i)_{i \in \N_+}$ come from and why they are independent of $(\widetilde{W}'_{1,j})_{j \in \N_+}$ and $(\beta_j)_{j \in \N_+}$. 
	\end{remark}

	To derive the scaling limit of $\Pi$, we employ the following corollary of Lemma~\ref{lem.site_bond_coupling}.
	\begin{corollary}\label{cor.BlockCoupling}
		Under the site-bond-percolation coupling in Lemma~\ref{lem.site_bond_coupling}, we define $(\widetilde{\Pi}_{i,j})_{i \in \N_+, j\in\N}$ such that 
		\begin{align}\label{eq.PartitionIsolation}
			\widetilde{\Pi}_i = \bigsqcup_{j=0}^\infty \widetilde{\Pi}_{i,j}.
		\end{align}
		Here $\widetilde{\Pi}_{i,j}$ are vertex sets of site-percolation clusters coming from the root-isolation for the cluster of $\widetilde{\Pi}_i$ in Lemma~\ref{lem.site_bond_coupling}, and $j$ is indexed by the increasing order of the root vertex.
		
		The partition $\Pi$ (associated to the site-percolation) can be obtained from  $\widetilde{\Pi}$ (associated to the bond-percolation) in the following way:
		\begin{itemize}
			\item If $\omega(1) = 1$, then $\{\Pi_i : i \in \N_+\} = \{\widetilde{\Pi}_{1}\} \cup \Ll\{\widetilde{\Pi}_{i,j} :i\geq 2, j\in\N\Rr\}$.
			\item If $\omega(1) = 0$, then $\{\Pi_i : i \in \N_+\} = \{\widetilde{\Pi}_{i,j}: i \in \N_+, j\in\N\}$.	
		\end{itemize}
	\end{corollary}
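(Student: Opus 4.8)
The plan is to verify Corollary~\ref{cor.BlockCoupling} essentially by unwinding the definitions in Lemma~\ref{lem.site_bond_coupling} and tracking what the root-isolation operation does to each cluster of the bond-percolation $\widetilde{T_\infty^\omega}$. First I would observe that, by Lemma~\ref{lem.site_bond_coupling}, the vertex set $\N_+$ is partitioned by $\widetilde{T_\infty^\omega}$ into the clusters $(\widetilde{\Pi}_i)_{i \in \N_+}$, and that the site-percolation $T_\infty^\omega$ is obtained from $\widetilde{T_\infty^\omega}$ by a sequence of edge-deletions, each of which acts within a single bond-cluster: for $i \geq 2$ (and for $i=1$ precisely when $\omega(1)=0$) we delete all edges from the root of $\widetilde{\Pi}_i$ to its children in $\widetilde{T_\infty^\omega}$. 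Since edge-deletions only refine the partition, every site-cluster is contained in some bond-cluster, which justifies writing $\widetilde{\Pi}_i = \bigsqcup_{j=0}^\infty \widetilde{\Pi}_{i,j}$ as in \eqref{eq.PartitionIsolation}, with $\widetilde{\Pi}_{i,j}$ the site-clusters obtained from the root-isolation applied to $\widetilde{\Pi}_i$, indexed by increasing root label (and with only finitely many nonempty terms when we restrict to $[n]$).

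Next I would treat the two cases separately, matching the statement. When $\omega(1)=0$: by Lemma~\ref{lem.site_bond_coupling} the root-isolation is implemented for \emph{every} bond-cluster, including the one containing vertex~$1$. Hence the site-clusters are exactly the pieces $\widetilde{\Pi}_{i,j}$ over all $i \in \N_+$ and $j \in \N$, giving $\{\Pi_i : i \in \N_+\} = \{\widetilde{\Pi}_{i,j}: i \in \N_+, j\in\N\}$. When $\omega(1)=1$: Lemma~\ref{lem.site_bond_coupling} says we perform root-isolation for all clusters \emph{except} the one containing vertex~$1$, which is left intact. One must check that this untouched cluster is precisely $\widetilde{\Pi}_1$ — this holds because vertex~$1$ is the global root (smallest label on $T_\infty$), so it is the smallest root among all bond-clusters, hence by convention the cluster containing it carries index~$1$. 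Therefore the site-clusters are $\widetilde{\Pi}_1$ together with the pieces $\widetilde{\Pi}_{i,j}$ for $i \geq 2$, $j \in \N$, which is the claimed identity. In both cases I should also note that the collections listed on the right-hand sides are genuinely partitions of $\N_+$ (the pieces are disjoint and exhaust $\N_+$), so the set equalities of cluster-collections are the full content of the statement.

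The only genuinely delicate point — and I expect this to be the main (minor) obstacle — is the bookkeeping in the $\omega(1)=1$ case: one must be careful that the cluster $\widetilde{\Pi}_1$ appearing on the bond side is \emph{not} further decomposed, and that removing it from the list does not disturb the indexing of the other pieces (which is why the corollary writes the right-hand side as an unordered union $\{\cdot\}$ rather than trying to re-index by $\N_+$). A secondary subtlety is the convention $\mathbf{p}(1)=0$: since vertex~$1$ has no parent on $T_\infty$, in the bond-percolation $\widetilde{T_\infty^\omega}$ the state $\omega(1)$ does not sever any edge above vertex~$1$, so vertex~$1$ always belongs to the cluster that becomes $\widetilde{\Pi}_1$, and the special treatment of $\omega(1)$ in Lemma~\ref{lem.site_bond_coupling} is exactly what is needed to recover the site-percolation definition \eqref{eq.percolation} at the root. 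Once these conventions are spelled out, the verification is a direct consequence of Lemma~\ref{lem.site_bond_coupling} and requires no new probabilistic input.
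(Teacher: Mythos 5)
Your proof is correct and follows essentially the same route as the paper, which simply observes that the corollary is a direct translation of Lemma~\ref{lem.site_bond_coupling} together with the decomposition \eqref{eq.PartitionIsolation}, the equalities being between unordered collections of sets. Your additional bookkeeping (that vertex $1$ is always the smallest root so its bond-cluster is $\widetilde{\Pi}_1$, and that $\omega(1)$ severs no edge in $\widetilde{T_\infty^\omega}$) is a correct and slightly more explicit spelling-out of the same argument.
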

	\begin{proof}
		Here the equality in the statement is for the set, so the elements are equal by rearrangement. The proof is a direct translation from Lemma~\ref{lem.site_bond_coupling} and \eqref{eq.PartitionIsolation}. 
	\end{proof}
	
	Now we can state the scaling limit of $\Pi$. We follow the convention that $\widetilde{\Pi}^{(n)}_{i,j}$ stands for the restriction of $\widetilde{\Pi}_{i,j}$ on $[n]$.
	\begin{proposition}\label{prop.SiteClusterLimit}
		Under the site-bond-percolation coupling in Lemma~\ref{lem.site_bond_coupling}, there exist i.i.d.\ random variables $(U_{i,j})_{i,j\in\N_+}$ uniformly distributed in $[0,1]$, such that: 
		\begin{enumerate}
			\item If $\omega(1) = 0$, then for every $k \in \N_+$, there exists $(i,j) \in \N_+ \times \N$  such that 
			$\Pi_k = \widetilde{\Pi}_{i,j}$ (see Corollary~\ref{cor.BlockCoupling}), and we have
			\begin{align}\label{eq.SiteClusterLimit1}
				n^{-p} \vert \Pi^{(n)}_k \vert = n^{-p} \vert \widetilde{\Pi}^{(n)}_{i,j} \vert \xrightarrow[\text{a.s.}]{n \to \infty} \widetilde{W}_i V_{i,j} =: W_k,
			\end{align}
			where $V_{i,j}$ is defined as 
			\begin{align}\label{eq.defV}
				V_{i,0} := 0, \qquad	V_{i,j} := \Ll(\prod_{\ell = 1}^{j-1}(1-U_{i,\ell})\Rr) U_{i,j}.
			\end{align}
			
			\item If $\omega(1) = 1$, then $\Pi_1 = \widetilde{\Pi}_{1}$ and 
			\begin{align}\label{eq.SiteClusterLimit2}
				n^{-p} \vert \Pi^{(n)}_1 \vert = n^{-p} \vert \widetilde{\Pi}^{(n)}_{1} \vert \xrightarrow[\text{a.s.}]{n \to \infty} \widetilde{W}_1.
			\end{align}
			Moreover, the statement around \eqref{eq.SiteClusterLimit1} is valid for $k \geq 2$ and $i \geq 2, j \in \N$.
			
			Here $(\widetilde{W}_i)_{i \in \N_+}$ are the random variables defined in \eqref{eq.BondClusterLimit}, and $(U_{i,j})_{i,j\in\N_+}$ are independent of $(\widetilde{W}_i)_{i \in \N_+}$.
		\end{enumerate}
	\end{proposition}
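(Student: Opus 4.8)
The plan is to combine the site--bond coupling of Lemma~\ref{lem.site_bond_coupling} (in the form of Corollary~\ref{cor.BlockCoupling}) with the known scaling limit for bond-percolation clusters (Proposition~\ref{prop.BondClusterLimit}) and a size-biased decomposition of the root-isolation of a single bond cluster. The key point is that when we perform root-isolation on the bond cluster $\widetilde{\Pi}_i$, we are essentially removing the root of a large RRT and looking at the asymptotic \emph{relative} sizes of the resulting subtrees. So first I would fix $i$ and analyze the internal structure of $\widetilde{\Pi}^{(n)}_i$: conditionally on its size being $m_n \sim \widetilde{W}_i n^p$, it is (by Lemma~\ref{lem.RRTiid}) an RRT of size $m_n$, and removing its root splits it into subtrees hanging off the children of the root. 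The children of the root of an RRT of size $m$ arrive at times governed by a stick-breaking/P\'olya urn scheme, which is exactly where the i.i.d.\ uniform variables $U_{i,\ell}$ enter: the $\ell$-th child's subtree captures a fraction of the remaining mass that converges almost surely to $U_{i,\ell}\prod_{\ell'<\ell}(1-U_{i,\ell'})$, giving the stick-breaking weights $V_{i,j}$ in \eqref{eq.defV}.

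Concretely, the second step is to make this stick-breaking statement rigorous for a \emph{single} cluster. In the continuous-time (Yule) embedding from Definition~\ref{def.ContinuousFrag}, the subtree rooted at $i$ (restricted to open vertices reachable from $i$) has total open mass growing like $e^{pt}$ with a Mittag--Leffler-type martingale limit, and each open child $c$ of $i$ starts its own independent Yule subtree of rate $p$ born at some finite time $\mathbf{b}(c)$; hence $e^{-pt}\vert(\text{subtree of }c)\vert$ converges a.s.\ to an independent copy of $\widetilde W_1$ times $e^{-p\mathbf{b}(c)}$. Normalising by the total mass of $\widetilde{\Pi}_i$ and ordering the children by birth time, the ratios of successive normalising constants $e^{-p\mathbf b}$ become the P\'olya-urn stick-breaking weights, which are a.s.\ of the form $V_{i,j}$ with i.i.d.\ uniform $U_{i,\ell}$ — this is the standard description of the recursive split of an RRT at its root. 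Thus I would state and prove: $n^{-p}\vert\widetilde{\Pi}^{(n)}_{i,j}\vert \to \widetilde{W}_i V_{i,j}$ a.s.\ for each $(i,j)$, with the $(U_{i,\ell})$ independent across $i$ and independent of $(\widetilde W_i)$ (the independence across $i$ comes from the independence of the Yule subtrees rooted at the different $i$'s, and independence from $(\widetilde W_i)$ because the $\widetilde W_i$ are the global martingale limits while the $U$'s are the internal split proportions — which for a size-conditioned RRT are independent of the size).

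The third step is just bookkeeping via Corollary~\ref{cor.BlockCoupling}: on $\{\omega(1)=0\}$ every site cluster $\Pi_k$ equals some $\widetilde{\Pi}_{i,j}$, so \eqref{eq.SiteClusterLimit1} follows from the single-cluster result; on $\{\omega(1)=1\}$, $\Pi_1=\widetilde{\Pi}_1$ already has no root-isolation applied to it (its root, vertex $1$, is open, so by the coupling we keep the edges to its children), giving \eqref{eq.SiteClusterLimit2} directly from Proposition~\ref{prop.BondClusterLimit}, while all other $\Pi_k$ with $k\geq 2$ are of the form $\widetilde{\Pi}_{i,j}$ with $i\geq 2$ and are handled as before. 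The main obstacle I anticipate is the careful identification of the limiting split proportions as the precise stick-breaking weights $V_{i,j}$ with \emph{i.i.d.\ uniform} $U_{i,\ell}$ (as opposed to some other exchangeable family), together with establishing the full independence structure — this requires invoking the P\'olya-urn/Chinese-restaurant description of how subtrees of an RRT grow under its root, and checking that the birth-time ratios in the Yule embedding indeed produce i.i.d.\ uniforms; the almost-sure joint convergence over all $(i,j)$ simultaneously (needed later for the $\ell^q$ statement in Theorem~\ref{thm.main2}) also needs the Mittag--Leffler martingale limits and a uniform tail control, but that is deferred to the proof of Theorem~\ref{thm.main2} and here only the coordinatewise statement is claimed.
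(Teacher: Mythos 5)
Your proposal is correct and follows essentially the same route as the paper: decompose $n^{-p}\vert\widetilde{\Pi}^{(n)}_{i,j}\vert$ as $n^{-p}\vert\widetilde{\Pi}^{(n)}_{i}\vert$ times the split proportion, apply Proposition~\ref{prop.BondClusterLimit} to the first factor, identify the proportions of the root-subtrees of an RRT as converging a.s.\ to the uniform stick-breaking (GEM) weights, and sort the cases $\omega(1)=0/1$ via Corollary~\ref{cor.BlockCoupling}. The paper simply cites the classical P\'olya-urn/GEM result for the stick-breaking step rather than rederiving it from the Yule-embedding birth times, but the argument is the same.
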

	\begin{proof}
		Equation~\eqref{eq.SiteClusterLimit2} is the case without root-isolation, so \eqref{eq.BondClusterLimit} applies directly. For the cases with the root-isolation, we have the coordinate-wise convergence of the following
		\begin{align*}
			n^{-p} \Ll(\vert \widetilde{\Pi}^{(n)}_{i,0}\vert, \vert \widetilde{\Pi}^{(n)}_{i,1}\vert, \vert \widetilde{\Pi}^{(n)}_{i,2}\vert, \cdots \Rr) = n^{-p} \vert \widetilde{\Pi}^{(n)}_{i} \vert \Ll(\frac{\vert \widetilde{\Pi}^{(n)}_{i,0}\vert}{\vert \widetilde{\Pi}^{(n)}_{i} \vert}, \frac{\vert \widetilde{\Pi}^{(n)}_{i,1}\vert}{\vert \widetilde{\Pi}^{(n)}_{i} \vert}, \frac{\vert \widetilde{\Pi}^{(n)}_{i,2}\vert}{\vert \widetilde{\Pi}^{(n)}_{i} \vert}, \cdots \Rr).
		\end{align*} 
		The limit of $n^{-p} \vert \widetilde{\Pi}^{(n)}_{i} \vert$ has already been deduced in \eqref{eq.BondClusterLimit}. Notice that almost surely ${\lim_{n \to \infty } \vert \widetilde{\Pi}^{(n)}_{i}\vert = \infty}$ and $ \widetilde{\Pi}^{(n)}_{i,0} = \{i\}$, we have $\lim_{n \to \infty} \frac{\vert \widetilde{\Pi}^{(n)}_{i,0}\vert}{\vert \widetilde{\Pi}^{(n)}_{i} \vert} = 0$.  
		The convergence of the remaining coordinates concern the proportion of subtrees after removing the root of RRT, which is the convergence to the \textbf{uniform stick-breaking distribution} (see \cite[Section~3.1]{ptiman2002})
		\begin{align*}
			\Ll(\frac{\vert \widetilde{\Pi}^{(n)}_{i,1}\vert}{\vert \widetilde{\Pi}^{(n)}_{i} \vert}, \frac{\vert \widetilde{\Pi}^{(n)}_{i,2}\vert}{\vert \widetilde{\Pi}^{(n)}_{i} \vert}, \frac{\vert \widetilde{\Pi}^{(n)}_{i,3}\vert}{\vert \widetilde{\Pi}^{(n)}_{i} \vert}, \cdots \Rr) \xrightarrow[\text{a.s.}]{n \to \infty} (U_{i,1}, (1-U_{i,1})U_{i,2}, (1-U_{i,1})(1-U_{i,2})U_{i,3}, \cdots).
		\end{align*}
		The uniform stick-breaking distribution is also known as a special case of Griffiths--Engen--McCloskey (GEM) distribution (see \cite[Section~3.2]{ptiman2002}). Here $(U_{i,j})_{j \in \N_+}$ are i.i.d. random variables uniformly on $[0,1]$. Moreover, these random variables are independent for every root-isolation on every $\widetilde{\Pi}_{i}$, thus we prove our statement.
	\end{proof}

	\subsection{Moment estimate}
	Proposition~\ref{prop.SiteClusterLimit} gives us the scaling limit of the size for one cluster, which is a partial result of Theorem~\ref{thm.main2}. Thus, some improvement is needed. In this part, we prove the main technical estimate for the proof of Theorem~\ref{thm.main2} in the following lemma. It will provide us a useful dominated function in the convergence of the vectors in $\ell^q$. Recall the process $(\TT_t)_{t \geq 0}$ as the branching embedding of the RRT and $(\TT^\omega_t)_{t \geq 0}$ as its associated site-percolation in Definition~\ref{def.ContinuousFrag}. We denote by $\TT^{i,\omega}_t$ the subtree rooted by $i$ in $\TT^\omega_t$, i.e. 
	\begin{align}\label{eq.defSubtreeSite}
		\TT^{i,\omega}_t := \{v \in \N_+: v \geq i  \text{ and is connected to } i \text{ on } \TT^{\omega}_t\}.
	\end{align} 
	We also recall that $\mathbf{b}(i)$ stands for the birth time of the vertex $i$ in $(\TT_t)_{t \geq 0}$.
	
	\begin{lemma}[Moment estimate]\label{lem.Moment}
		For every $p \in (0,1)$ and $q \in (\frac{1}{p}, \infty)$, there exists a finite positive constant $C_{p,q}$ such that the following estimate holds for every $k \in \N_+$
		\begin{align}\label{eq.Moment}
			\E\Ll[\sum_{i \in \N_+: \mathbf{b}(i) \geq k} \Ll(\sup_{t \in \R_+} e^{-pt} \vert \TT^{i,\omega}_t\vert\Rr)^q\Rr] \leq C_{p,q} e^{-(pq - 1)k}.
		\end{align}
	\end{lemma}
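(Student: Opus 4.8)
The plan is to reduce the sum to a product of a deterministic counting factor (the number of vertices born after time $k$) and a uniform moment bound for a single subtree process, exploiting the branching independence from Definition~\ref{def.ContinuousFrag}. First I would observe that, conditionally on the labelled genealogy of $(\TT_t)_{t\geq 0}$, each subtree $\TT^{i,\omega}_t$ started at the birth time $\mathbf b(i)$ evolves as an independent copy of the whole site-percolated Yule process, time-shifted by $\mathbf b(i)$; more precisely, writing $\mathcal W^{(i)}$ for the independent martingale limits, one has $\sup_{t\in\R_+} e^{-pt}\vert \TT^{i,\omega}_t\vert = e^{-p\,\mathbf b(i)}\sup_{s\geq 0} e^{-ps}\vert \widehat{\TT}^{i,\omega}_s\vert$, where $\widehat\TT^{i,\omega}$ is a fresh site-percolated Yule process independent of $\mathbf b(i)$ and of the event $\{\mathbf b(i)\geq k\}$. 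Therefore, taking expectations,
\begin{align*}
\E\Ll[\sum_{i:\,\mathbf b(i)\geq k} \Ll(\sup_{t} e^{-pt}\vert\TT^{i,\omega}_t\vert\Rr)^q\Rr]
= \Ll(\E\Ll[\sup_{s\geq 0} e^{-ps}\vert\widehat\TT^{\omega}_s\vert\Rr]^q?\Rr)\cdot \E\Ll[\sum_{i:\,\mathbf b(i)\geq k} e^{-pq\,\mathbf b(i)}\Rr],
\end{align*}
so the two tasks are: (a) bound the single-subtree moment $M_{p,q}:=\E[(\sup_{s\geq 0}e^{-ps}\vert\widehat\TT^{\omega}_s\vert)^q]<\infty$, and (b) compute (or bound) the counting sum $S_k:=\E[\sum_{i:\mathbf b(i)\geq k}e^{-pq\,\mathbf b(i)}]$.

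For step (b), I would use the explicit law of the Yule process: the $n$-th vertex is born at $\mathbf b(n)=\sum_{j=1}^{n-1}\mathbf e_j/j$ with independent $\mathbf e_j\sim\mathrm{Exp}(1)$, equivalently $e^{-\mathbf b(n)}$ has a product-of-Beta structure. A clean route is the Mellin/Laplace identity $\E[e^{-\alpha\,\mathbf b(n)}]=\prod_{j=1}^{n-1}\frac{j}{j+\alpha}=\frac{\Gamma(n)\Gamma(1+\alpha)}{\Gamma(n+\alpha)}\asymp \Gamma(1+\alpha)\,n^{-\alpha}$ with $\alpha=pq>1$, so that $\sum_{n\geq 1}\E[e^{-pq\,\mathbf b(n)}]<\infty$; but I actually need the restricted sum over $\mathbf b(i)\geq k$. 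For that, note $\sum_{i}e^{-pq\,\mathbf b(i)}\Ind{\mathbf b(i)\geq k}\leq e^{-(pq-1)k}\sum_i e^{-\mathbf b(i)}\Ind{\mathbf b(i)\geq k}$, and $\sum_i e^{-\mathbf b(i)} = \int_0^\infty e^{-t}\,dN_t$ where $N_t=\vert V(\TT_t)\vert$ is the Yule counting process; since $e^{-t}N_t\to\mathcal W$ a.s.\ and in $L^1$ (indeed $N_t$ is geometric with mean $e^t$), an integration by parts gives $\sum_i e^{-\mathbf b(i)}\Ind{\mathbf b(i)\geq k}\leq e^{-k}N_\infty\vert_{[k,\infty)}$-type bound whose expectation is $O(1)$ uniformly in $k$; hence $S_k\leq C\,e^{-(pq-1)k}$, which is exactly the claimed decay rate. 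Alternatively and more transparently: $\E[\sum_i e^{-pq\,\mathbf b(i)}\Ind{\mathbf b(i)\geq k}]=\int_k^\infty e^{-pq t}\,\E[dN_t]=\int_k^\infty e^{-pq t} e^{t}\,dt = \frac{1}{pq-1}e^{-(pq-1)k}$, using $\E[N_t]=e^t$ and Fubini — this is the cleanest computation and pins down $S_k$ exactly.

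For step (a), finiteness of $M_{p,q}$ is the real content. The process $\vert\widehat\TT^{\omega}_t\vert$ (the size of the cluster containing the root of a site-percolated Yule process) is dominated by $\vert V(\TT_t)\vert$, i.e.\ by a pure Yule process $N_t$ of rate $1$; but bounding $\E[(\sup_t e^{-pt}N_t)^q]$ would require $q$-th moments of $\sup_t e^{-t}N_t$, which exist for all $q$ since that supremum is a.s.\ finite with all moments (it dominates and is dominated by the a.s.\ limit up to universal multiplicative fluctuations — $e^{-t}N_t$ is a nonnegative martingale bounded in every $L^q$ because $N_t$ is geometric). Here the scaling is $e^{-pt}$, not $e^{-t}$, and $p<1$, so $e^{-pt}N_t = e^{(1-p)t}(e^{-t}N_t)\to\infty$: the crude domination by $N_t$ is useless and one genuinely needs that the root cluster grows like $e^{pt}$, not $e^t$. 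The correct comparison, as noted in the remark after Proposition~\ref{prop.BondClusterLimit}, is that the root cluster of the site-percolation is itself a Yule process of rate $p$ (open vertices produce open offspring at rate $p$ by thinning), call it $Y_t$; then $e^{-pt}Y_t$ is a nonnegative martingale, $e^{-pt}Y_t\to\mathcal W_p$ a.s., and $\sup_t e^{-pt}Y_t$ has finite moments of all orders. Concretely, $Y_t$ at its $n$-th jump equals $n$ and the jump times are those of a rate-$p$ Yule process, so $\sup_t e^{-pt}Y_t = \sup_n n\,e^{-p\,\mathbf b^{(p)}(n)}$ with $\mathbf b^{(p)}(n)=\tfrac1p\sum_{j=1}^{n-1}\mathbf e_j/j$; using $\E[n^q e^{-pq\,\mathbf b^{(p)}(n)}] = n^q\prod_{j<n}(j/(j+q))\asymp \Gamma(1+q) n^{q-q}=\Gamma(1+q)$ bounded, plus a union bound / maximal inequality (Doob applied to the $L^q$-bounded martingale $e^{-pt}Y_t$, valid since $q>1/p>1$), one gets $M_{p,q}\leq C_{p,q}<\infty$.

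The main obstacle is step (a): one must resist the temptation to dominate the root cluster by the full tree and instead use the thinning identity to see it as a rate-$p$ Yule process, then invoke Doob's $L^q$-maximal inequality — which requires precisely the hypothesis $q>1/p$ to have $pq>1$ and hence $L^q$-boundedness of the normalized martingale $e^{-pt}Y_t$. Everything else (the branching decomposition, Fubini with $\E[N_t]=e^t$, and assembling $C_{p,q}=M_{p,q}/(pq-1)$) is routine once (a) is in hand.
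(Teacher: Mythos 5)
Your proposal is correct and follows essentially the same route as the paper: both rest on the branching property (each subtree rooted at $i$ is, after time $\mathbf{b}(i)$, an independent rate-$p$ Yule process, or trivially of size one if $i$ is closed), Doob's $L^q$ maximal inequality applied to the martingale $e^{-pt}G_t$ with exponential limit (valid since $q>1/p>1$), and the observation that the weighted count of late-born vertices decays like $e^{-(pq-1)k}$ because $pq>1$. The only difference is bookkeeping: the paper discretizes the birth times into unit intervals and applies Wald's identity with the crude bound $\E[\#\{i:\mathbf{b}(i)\in[m,m+1)\}]\le e^{m+1}$ before summing a geometric series, whereas you compute $\E\bigl[\sum_i e^{-pq\,\mathbf{b}(i)}\Ind{\mathbf{b}(i)\ge k}\bigr]=\int_k^\infty e^{-(pq-1)t}\,\ddr t$ exactly via Fubini and $\E[N_t]=e^t$ — a marginally cleaner version of the same estimate.
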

	\begin{proof}
		We make the discretisation of the birth time
		\begin{align}\label{eq.MomentDiscrete}
			\E\Ll[\sum_{i \in \N_+: \mathbf{b}(i) \geq k} \Ll(\sup_{t \in \R_+} e^{-pt} \vert \TT^{i,\omega}_t\vert\Rr)^q\Rr] = \sum_{m=k}^\infty \E\Ll[\sum_{i \in \N_+: \mathbf{b}(i) \in [m,m+1)} \Ll(\sup_{t \in \R_+} e^{-pt} \vert \TT^{i,\omega}_t\vert\Rr)^q\Rr].
		\end{align}
		We just focus on one interval. For $t \geq \mathbf{b}(i)$, the term $\vert \TT^{i,\omega}_t\vert$ is a Yule process of density $p$ when this is an open cluster, or $\vert \TT^{i,\omega}_t\vert = 1$ when the vertex $i$ is closed, and every term evolves independently. Thus, we have a family of i.i.d.\ Yule processes $\{(G^i_t)_{t \geq 0}\}_{i \in \N_+}$ of parameter $p$ such that 
		\begin{align*}
			\forall i \in \N_+: \mathbf{b}(i) \in [m,m+1), \qquad e^{-pt} \vert \TT^{i,\omega}_t\vert &= e^{-p\mathbf{b}(i)} \Ll(e^{-p(t-\mathbf{b}(i))} \Ll(G^i_{t-\mathbf{b}(i)}\omega(i) + \Ll(1-\omega(i)\Rr)\Rr)\Rr)\\
			&\leq e^{-pm} \Ll(e^{-p(t-\mathbf{b}(i))} \Ll(G^i_{t-\mathbf{b}(i)}\Rr)\Rr).
		\end{align*}
		Here we gain a factor $e^{-pm}$ using the condition $\mathbf{b}(i) \in [m,m+1)$. This implies that
		\begin{align*}
			\forall i \in \N_+: \mathbf{b}(i) \in [m,m+1), \qquad \Ll(\sup_{t \in \R_+} e^{-pt} \vert \TT^{i,\omega}_t\vert\Rr) \leq e^{-pm}\Ll(\sup_{t \in \R_+} e^{-pt} G^i_t\Rr).
		\end{align*}
		We then apply the Wald identity to obtain 
		\begin{multline}\label{eq.MomentOneTerm}
			\E\Ll[\sum_{i \in \N_+: \mathbf{b}(i) \in [m,m+1)} \Ll(\sup_{t \in \R_+} e^{-pt} \vert \TT^{i,\omega}_t\vert\Rr)^q\Rr] \\
			\leq e^{-pqm} \E[\#\{i \in \N_+: \mathbf{b}(i) \in [m,m+1)\}]\E\Ll[\Ll(\sup_{t \in \R_+} e^{-pt} G_t\Rr)^q\Rr].
		\end{multline}
		Here we denote by $(G_t)_{t \geq 0}$ a canonical Yule process of density $p$. Then $(e^{-pt} G_t)_{t \geq 0}$ is a martingale satisfying that $e^{-pt} G_t \xrightarrow[a.s.]{t \to \infty} \mathcal{E}$, where $\mathcal{E}$ follows the standard exponential distribution (see \cite[Remark 5, Page 130]{AN2004}). Moreover,  Doob's maximal inequality applies that 
		\begin{align}\label{eq.Doob}
			\E\Ll[\Ll(\sup_{t \in \R_+} e^{-pt} G_t\Rr)^q\Rr] \leq \Ll(\frac{q}{q-1}\Rr)^q \E[ \mathcal{E}^q] = \Ll(\frac{q}{q-1}\Rr)^q \Gamma(q+1).
		\end{align}
		The number of vertices can be estimated by the size of Yule process
		\begin{align}\label{eq.numVertices}
			\E[\#\{i \in \N_+: \mathbf{b}(i) \in [m,m+1)\}] \leq \E[\#\{i \in \N_+: \mathbf{b}(i) < m+1\}] = e^{m+1}.
		\end{align}
		We combine \eqref{eq.numVertices} and \eqref{eq.Doob}, and put them back to \eqref{eq.MomentOneTerm} to obtain
		\begin{align}\label{eq.MomentOneTermBound}
			\E\Ll[\sum_{i \in \N_+: \mathbf{b}(i) \in [m,m+1)} \Ll(\sup_{t \in \R_+} e^{-pt} \vert \TT^{i,\omega}_t\vert\Rr)^q\Rr] \leq e\Gamma(q+1)\Ll(\frac{q}{q-1}\Rr)^q e^{-(pq-1)m}.
		\end{align}
		As $pq > 1$, this bound is of exponential decay in function of $m$. We put this estimate back to  \eqref{eq.Moment}, then obtain the desired result.
	\end{proof}

	\subsection{Proof of Theorem~\ref{thm.main2}}
	With the preparation in the previous two subsections, we are now ready to prove Theorem~\ref{thm.main2}. Some basic facts about the convergence in $\ell^q$ space are recalled in Proposition~\ref{prop.ellp} in Appendix~\ref{sec.ellp}.
	
	\begin{proof}[Proof of Theorem~\ref{thm.main2}]
		
		We denote by $\vert \Pi^{(n)} \vert = (\vert \Pi^{(n)}_i \vert)_{i \in \N_+}$ and $\vert \Pi^{(n),\downarrow} \vert = (\vert \Pi^{(n),\downarrow}_i \vert)_{i \in \N_+}$ as a shorthand of vectors. The natural candidate of the limit is the ranked version of $(W_i)_{i \in \N_+}$ by decreasing order obtained in Proposition~\ref{prop.SiteClusterLimit}, which is denoted by $W^{\downarrow} = (W^{\downarrow}_i)_{i \in \N_+}$. From (2) of Proposition~\ref{prop.ellp}, because the ranking rearrangement 
		is a non-expansive operator (see Proposition \ref{sec.ellp} for the definition), for every $q \in (\frac{1}{p}, \infty]$ we have  
		\begin{align}\label{eq.main2usual}
			\norm{n^{-p}\vert \Pi^{(n),\downarrow} \vert - W^{\downarrow}}_{\ell^q} \leq  \norm{n^{-p}\vert \Pi^{(n)} \vert - W}_{\ell^q}.
		\end{align}
		Then we only need to focus on the convergence of the right-hand side.

		To prove the $\ell^q$ convergence, we have an equivalent description in (3) of Proposition~\ref{prop.ellp}. Here the coordinate-wise convergence is already proved in Proposition~\ref{prop.SiteClusterLimit}, and it remains to verify the second condition for the tail, i.e.
		\begin{align}\label{eq.TailBlock}
			\lim_{k \to \infty} \limsup_{n \to \infty} \sum_{i \geq k} \Ll(n^{-p}\vert \Pi^{(n)}_i \vert\Rr)^q = 0, \qquad \P\text{-a.s.}.
		\end{align}
		Actually, one only needs to prove \eqref{eq.TailBlock} for $q = \frac{1}{p} + \epsilon$ with $\epsilon > 0$ and arbitrarily small. Then (1) of Proposition~\ref{prop.ellp} allows to extend the convergence to $(\frac{1}{q}, \infty]$.

		We will prove \eqref{eq.TailBlock} once again using the embedding in branching process from Definition~\ref{def.ContinuousFrag}. Recall  the stopping time $\tau_n$ defined in \eqref{eq.stopingTime} and $T_n^\omega = \mathcal{T}_{\tau_n}^\omega$ from \eqref{eq.correspondence}, then for every $k \in \N_+$  we have
		\begin{align}\label{eq.inclusion}
			\Ll\{\Pi^{(n)}_i: i \geq k\Rr\} \subset \Ll\{V(\TT^{i,\omega}_{\tau_n}): i \geq k\Rr\}.
		\end{align}
		Notice that every cluster $\Pi^{(n)}_i$ is necessarily a subtree in $\mathcal{T}_{\tau_n}^\omega$, and the its root vertex is the $i$-th smallest root, so it must be at least $i$ (see Definition~\ref{def.Canonical}). This implies the inclusion of sets above. Then we have
		\begin{align*}
			\sum_{i \geq k} \Ll(n^{-p}\vert \Pi^{(n)}_i \vert\Rr)^q \leq \sum_{i \geq k} \Ll(n^{-p}\vert \TT^{i,\omega}_{\tau_n}\vert\Rr)^q,
		\end{align*}
		and the limits of the two sides give us
		\begin{equation}\label{eq.TightReduction0}
			\begin{split}
				\limsup_{n \to \infty}\sum_{i \geq k} \Ll(n^{-p}\vert \Pi^{(n)}_i \vert\Rr)^q &\leq \limsup_{n \to \infty}\sum_{i \geq k} \Ll(n^{-p}\vert \TT^{i,\omega}_{\tau_n}\vert\Rr)^q\\
				&= \limsup_{n \to \infty} (e^{p \tau_n} n^{-p})^q \limsup_{n \to \infty}\sum_{i \geq k} \Ll(e^{-p \tau_n}\vert \TT^{i,\omega}_{\tau_n}\vert\Rr)^q\\
				&\leq \lim_{n \to \infty} e^{pq (\tau_n - \log n)} \limsup_{t \to \infty}\sum_{i \geq k} \Ll(e^{-p t}\vert \TT^{i,\omega}_{t}\vert\Rr)^q.
			\end{split}
		\end{equation}
		The quantity $(\tau_n - \log n)$ in the first term admits an almost sure finite limit, which is a classical result and see \cite[Theorem~3, Page~120]{AN2004} for its proof. Therefore, we only need to show the decay of the second term, which can be bounded by  
		\begin{equation}\label{eq.TightReduction}
			\begin{split}
				\lim_{k \to \infty} \limsup_{t \to \infty}\sum_{i \geq k} \Ll(e^{-p t}\vert \TT^{i,\omega}_{t}\vert\Rr)^q	& \leq \lim_{k \to \infty} \sum_{i \geq k} \Ll(\sup_{t \in \R_+} e^{-p t}\vert \TT^{i,\omega}_{t}\vert\Rr)^q\\
				&=  \lim_{k \to \infty}  \sum_{i \in \N_+: \mathbf{b}(i) \geq k} \Ll(\sup_{t \in \R_+} e^{-p t}\vert \TT^{i,\omega}_{t}\vert\Rr)^q.
			\end{split}
		\end{equation}
		and it suffices to prove the almost sure convergence of the right-hand side. This can be realized by the moment estimate in Lemma~\ref{lem.Moment}: we define the event $A_k$
		\begin{align*}
			A_k := \Ll\{  \sum_{i \in \N_+: \mathbf{b}(i) \geq k} \Ll(\sup_{t \in \R_+} e^{-p t}\vert \TT^{i,\omega}_{t}\vert\Rr)^q  > e^{\frac{-(pq - 1)k}{2}}\Rr\}.
		\end{align*}
		Then by Markov inequality, we have 
		\begin{align*}
			\sum_{k=1}^\infty\P[A_k] \leq \sum_{k=1}^\infty C_{p,q}e^{\frac{-(pq - 1)k}{2}} < \infty.
		\end{align*}
		Then Borel--Cantelli lemma applies, which implies that $(A_k)_{k \in \N_+}$ only happens finite times almost surely and $\lim_{k \to \infty}  \sum_{i \in \N_+: \mathbf{b}(i) \geq k} \Ll(\sup_{t \in \R_+} e^{-p t}\vert \TT^{i,\omega}_{t}\vert\Rr)^q = 0$. We put it back to \eqref{eq.TightReduction} and \eqref{eq.TightReduction0}, then this concludes \eqref{eq.TailBlock}. 
		
	\end{proof}
	
	\section{Further discussions}\label{sec.discussions}
	
	Despite the simple definition, a lot of questions can be asked for the site-percolation in this paper. Here we list some questions from the audience, during \textit{The 10th Workshop on Branching Processes and Related Topics} held at Shenzhen MSU-BIT University.	
	\begin{question}[by Xinxin Chen]
		Theorem~\ref{thm.main2} addresses the scaling limit of the $i$-th largest cluster $\Ll\vert \Pi^{(n), \downarrow}_i \Rr\vert$, and the index $i$ is fixed when $n$ (the size of RRT) goes to infinity. What should be the  order of $\Ll\vert \Pi^{(n), \downarrow}_i \Rr\vert$ for a general index ? For example, if we look at $\Ll\vert \Pi^{(n), \downarrow}_{\lfloor \theta n \rfloor}\Rr\vert$, or $\Ll\vert \Pi^{(n), \downarrow}_{\lfloor n^{\alpha}\rfloor}\Rr\vert$ with $\alpha \in (0,1)$, is there some possible non-trivial asymptotic ?
	\end{question}
	
	\begin{question}[by Michel Pain]
		Conditioned that the vertex $1$ is open, what is the probability that the largest cluster contains the index $1$ ? That is, 
		\begin{align*}
			\P\Ll[\Pi^{(n)}_{1} = \Pi^{(n), \downarrow}_1 \vert \omega(1)=1\Rr] = ?
		\end{align*}
		Heuristically, in the embedding branching process, the cluster containing $1$ will have more time to grow in this situation, and naturally has more chance to be the largest. We are interested in the probability above, especially for large $n$. This question can also be asked for the bond-percolation, where the conditional event $\{\omega(1)=1\}$ is not needed. 
	\end{question}

	\begin{question}[by Quan Shi]
		Bertoin considered the percolation on RRT in the supercritical regime in \cite{bertoin2014sizes}, i.e. $p(n) = 1 - s/ \ln n +o(1/\ln n)$ which goes to $1$ for large $n$. The present paper explores the critical regime as $p(n)$ is constant. Then, what should be a reasonable model for the  subcritical regime and what is its scaling limit of the largest cluster ? This question was also mentioned at the end of the introduction of \cite{bertoin2014sizes}.
	\end{question}
	
	\begin{question}[by Hui He]
		There are a lot of statistics about RRT, but very few results about its scaling limit of geometry. One reason is that RRT is too fat: when the size is $n$, the typical height is $\log n$. However, is it possible to derive a scaling limit conditioned on its height or width ?
	\end{question}
	
	Finally, we reformulate another question in the anonymous report.
	
	\begin{question}[by an anonymous referee]
		Consider a model in the spirit of preferential attachment: let $\mathrm{deg}(v)$ stand for the degree of the vertex $v$, then we assign the site open in function of $\mathrm{deg}(v)$, how will it influence Theorem~\ref{thm.main1} and ~\ref{thm.main2} ?
	\end{question}

	\appendix
	
	\section{Convergence in $\ell^p$ space}\label{sec.ellp}
	We recall some basic properties in $\ell^p$ space.
	\begin{proposition}\label{prop.ellp}
		For any $p \in [1,\infty]$, the following properties hold for the $\ell^p$ space.
		\begin{enumerate}
			\item For every $1 \leq p < q \leq \infty$ and a sequence $\mathbf{x} = (x_i)_{i \in \N_+}$, then $\norm{\mathbf{x}}_{\ell^q} \leq \norm{\mathbf{x}}_{\ell^p}$.
			\item (Nonexpansivity of rearrangement) For every two positive sequences $\mathbf{x}, \mathbf{y} \in \ell^p$, the following estimate holds
			\begin{align*}
				\norm{\mathbf{x}^{\downarrow} - \mathbf{y}^{\downarrow}}_{\ell^p} \leq \norm{\mathbf{x} - \mathbf{y}}_{\ell^p}.
			\end{align*}
			
			\item Given a family of sequence $\mathbf{x}_n = (x_{n,1}, x_{n,2}, \cdots) \in \ell^p$, then $(\mathbf{x}_n)_{n \in \N_+}$ converges in $\ell^p$ if and only if the following two conditions are valid:
			\begin{itemize}
				\item For every coordinate $i \in \N_+$, the sequence $(x_{n,i})_{n \in \N_+}$ converges.
				\item Let $\mathbf{x}^{(k)}_n = (x_{n,k+1}, x_{n,k+2}, \cdots)$ and $\lim_{k \to \infty} \limsup_{n \to \infty} \norm{\mathbf{x}^{(k)}_n}_{\ell^p} = 0$. 
			\end{itemize}
		\end{enumerate}
	\end{proposition}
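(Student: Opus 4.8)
The plan is to prove the three items separately, all by elementary means.

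For item (1), I would first dispose of the degenerate cases $\norm{\mathbf{x}}_{\ell^p}\in\{0,\infty\}$, and otherwise rescale so that $\norm{\mathbf{x}}_{\ell^p}=1$ (the asserted inequality is $1$-homogeneous). Then $|x_i|\le 1$ for every $i$, so $|x_i|^q\le|x_i|^p$ whenever $q>p$, and summation gives $\norm{\mathbf{x}}_{\ell^q}^q\le\norm{\mathbf{x}}_{\ell^p}^p=1$; for $q=\infty$ one has directly $\sup_i|x_i|\le\big(\sum_i|x_i|^p\big)^{1/p}$.

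For item (2), the crux is the two-point exchange inequality: for $a\ge b\ge 0$ and $c\ge d\ge 0$,
\[
|a-c|^p+|b-d|^p\ \le\ |a-d|^p+|b-c|^p .
\]
This holds because $y\mapsto|a-y|^p-|b-y|^p$ has derivative $-p\big(\sgn(a-y)|a-y|^{p-1}-\sgn(b-y)|b-y|^{p-1}\big)\le 0$ for $p\ge 1$, since $u\mapsto\sgn(u)|u|^{p-1}$ is non-decreasing on $\R$; hence that function is non-increasing, and evaluating it at $c\ge d$ gives the inequality. Granting this, for finite sequences any bijective pairing of the multisets $\{x_i\}$ and $\{y_i\}$ can be brought to the ``sorted-against-sorted'' pairing by successive exchanges of crossing pairs, none of which increases $\sum_i|\cdot-\cdot|^p$; since the index-to-index pairing $(x_i,y_i)_i$ is one such pairing, this yields $\norm{\mathbf{x}^{\downarrow}-\mathbf{y}^{\downarrow}}_{\ell^p}\le\norm{\mathbf{x}-\mathbf{y}}_{\ell^p}$ in the finite case. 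For infinite $\mathbf{x},\mathbf{y}\in\ell^p$ I would pass to the limit over truncations to the first $N$ coordinates (using item (3) to justify $\mathbf{x}_N^{\downarrow}\to\mathbf{x}^{\downarrow}$ in $\ell^p$), and for $p=\infty$ argue directly that $y_i^{\downarrow}\ge x_i^{\downarrow}-\norm{\mathbf{x}-\mathbf{y}}_{\ell^\infty}$ (and symmetrically), by noting that at least $i$ indices $j$ satisfy $x_j\ge x_i^{\downarrow}$, all of which then have $y_j\ge x_i^{\downarrow}-\norm{\mathbf{x}-\mathbf{y}}_{\ell^\infty}$.

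For item (3), the ``only if'' direction is immediate from $|x_{n,i}-x_i|\le\norm{\mathbf{x}_n-\mathbf{x}}_{\ell^p}$ and $\norm{\mathbf{x}_n^{(k)}}_{\ell^p}\le\norm{\mathbf{x}_n-\mathbf{x}}_{\ell^p}+\norm{\mathbf{x}^{(k)}}_{\ell^p}$ together with $\mathbf{x}\in\ell^p$. For the ``if'' direction, the two hypotheses force $\sup_n\norm{\mathbf{x}_n}_{\ell^p}<\infty$ (fix $k_0$ with $\limsup_n\norm{\mathbf{x}_n^{(k_0)}}_{\ell^p}\le 1$ and use that the head $\sum_{i\le k_0}|x_{n,i}|^p$ is a finite sum of convergent sequences), so by Fatou the coordinatewise limit $\mathbf{x}=(x_i)_i$ lies in $\ell^p$; then splitting $\norm{\mathbf{x}_n-\mathbf{x}}_{\ell^p}$ into its first $k$ coordinates (which tend to $0$ as $n\to\infty$ for each fixed $k$) and the two tails $\norm{\mathbf{x}_n^{(k)}}_{\ell^p}$ and $\norm{\mathbf{x}^{(k)}}_{\ell^p}$, and sending $k\to\infty$ first, gives the claim. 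The case $p=\infty$ is identical with suprema in place of sums. The only genuinely non-routine point is item (2): one must pin down the exchange inequality with the correct hypothesis on $p$ and then treat separately the passage from finite to infinite sequences and the $p=\infty$ endpoint (which the finite exchange argument does not cover); the rest is bookkeeping with triangle inequalities and the definition of $\ell^p$.
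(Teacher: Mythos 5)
Your items (1) and (3) follow essentially the same route as the paper: the same normalisation trick ($\norm{\mathbf{x}}_{\ell^p}=1$ forces $|x_i|\le 1$, hence $|x_i|^q\le|x_i|^p$) for (1), and the same Fatou-plus-head/tail splitting for the ``if'' direction of (3). The genuine divergence is item (2), which the paper does not prove at all --- it simply cites \cite[Theorem~3.5]{lieb2001} --- whereas you give a self-contained argument: the two-point exchange inequality $|a-c|^p+|b-d|^p\le|a-d|^p+|b-c|^p$ for $a\ge b\ge 0$, $c\ge d\ge 0$ (your monotonicity argument for $y\mapsto|a-y|^p-|b-y|^p$ is correct for $p\ge 1$), a sorting-by-transpositions reduction for finite sequences, a truncation limit for the infinite case, and a direct counting argument at $p=\infty$. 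This is sound and arguably more informative than the citation. One step to make airtight: when you invoke item (3) to obtain $(\mathbf{x}_N)^{\downarrow}\to\mathbf{x}^{\downarrow}$ in $\ell^p$, the tail condition must be checked without appealing to nonexpansivity itself (which would be circular); this works because the sorted truncation satisfies $(\mathbf{x}_N)^{\downarrow}_i\le\mathbf{x}^{\downarrow}_i$ for every $i$, so its tails are dominated by those of $\mathbf{x}^{\downarrow}$ uniformly in $N$. A last caveat, which applies equally to the paper's own statement and proof: the ``only if'' direction of (3) fails at $p=\infty$, since the tail condition would force $x_i\to 0$, which a general element of $\ell^\infty$ need not satisfy; both you and the paper gloss over this, and it is harmless because (3) is only ever applied with a finite exponent in the body of the paper.
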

	\begin{proof}
		For (1), the case $q = \infty$ is clear and the case $\norm{\mathbf{x}}_{\ell^p} = \infty$ is trivial, thus we focus on the case $p,q \in [1, \infty)$ and $\norm{\mathbf{x}}_{\ell^p} < \infty$. By  normalisation, we suppose without loss of generality  $\norm{\mathbf{x}}_{\ell^p} = 1$. Then $\vert x_{n,i}\vert \leq 1$ for every $i \in \N_+$, and thus $\vert x_{n,i}\vert^q \leq \vert x_{n,i}\vert^p$ because $q > p$. Therefore, we obtain
		\begin{align*}
			\norm{\mathbf{x}}^q_{\ell^q} = \sum_{i=1}^\infty\vert x_i\vert^q \leq \sum_{i=1}^\infty\vert x_i\vert^p = 1 = \norm{\mathbf{x}}^q_{\ell^p}.
		\end{align*}  
		This justifies the inequality.
		
		(2) is classical and we cite \cite[Theorem~3.5]{lieb2001} for its proof.

		Concerning (3), we suppose at first that $\mathbf{x}_n \xrightarrow[\ell^p]{n\to \infty} \mathbf{x}$, then it is obvious that the $\ell^p$ convergence implies the coordinate-wise convergence. Moreover, we notice that $\norm{\mathbf{x}^{(k)}_n - \mathbf{x}^{(k)}}_{\ell^p} \leq \norm{\mathbf{x}_n - \mathbf{x}}_{\ell^p}$, so the convergence of $\mathbf{x}_n$ in $\ell^p$ also implies that of $\mathbf{x}^{(k)}_n$, and we verify the second condition
		\begin{align*}
			\limsup_{n \to \infty} \norm{\mathbf{x}^{(k)}_n}_{\ell^p} =  \norm{\mathbf{x}^{(k)}}_{\ell^p} \xrightarrow{k \to \infty} 0.
		\end{align*}
		
		For the other direction, we suppose now $x_i := \lim_{n \to \infty} x_{n,i}$ and $\mathbf{x} := (x_i)_{i \in \N_+}$. It gives us a natural candidate and we prove at first $\mathbf{x} \in \ell^p$. Picking an arbitrary $k \in \N_+$,  we apply Fatou's lemma
		\begin{align*}
			\norm{\mathbf{x}}^p_{\ell^p}  \leq \liminf_{n \to \infty} \norm{\mathbf{x}_n}^p_{\ell^p} = \liminf_{n \to \infty}\sum_{i=1}^\infty \vert x_{n,i}\vert^p 	\leq \sum_{i=1}^k \vert x_{i}\vert^p + \limsup_{n \to \infty} \Ll(\sum_{i=k}^\infty \vert x_{n,i}\vert^p\Rr) < \infty.
		\end{align*}
		In the last two inequalities above, we apply the two conditions. The $\ell^p$ convergence can be verified similarly
		\begin{align*}
			\norm{\mathbf{x}_n - \mathbf{x}}^p_{\ell^p} &\leq \sum_{i=1}^{k} \vert x_{n,i} - x_i\vert^p + \norm{\mathbf{x}^{(k)}_n - \mathbf{x}^{(k)}}^p_{\ell^p} \\
			&\leq \sum_{i=1}^{k} \vert x_{n,i} - x_i\vert^p + C_p\norm{\mathbf{x}^{(k)}_n}^p_{\ell^p} + C_p\norm{\mathbf{x}^{(k)}}^p_{\ell^p}.
		\end{align*}
		We use the first condition of coordinate-wise convergence for the finite dimension and obtain
		\begin{align*}
			\limsup_{n \to \infty}\norm{\mathbf{x}_n - \mathbf{x}}^p_{\ell^p} &\leq \limsup_{n \to \infty} \Ll(\sum_{i=1}^{k} \vert x_{n,i} - x_i\vert^p + C_p\norm{\mathbf{x}^{(k)}_n}^p_{\ell^p} + C_p\norm{\mathbf{x}^{(k)}}^p_{\ell^p}\Rr)\\
			&=  \limsup_{n \to \infty} 2 C_p\norm{\mathbf{x}^{(k)}_n}^p_{\ell^p}.
		\end{align*}
		Here we also use Fatou's lemma in the last inequality that $\norm{\mathbf{x}^{(k)}}^p_{\ell^p} \leq \liminf_{n \to \infty} \norm{\mathbf{x}^{(k)}_n}^p_{\ell^p}$. Since $k$ is arbitrary, we take the limit and use the second condition
		\begin{align*}
			\limsup_{n \to \infty}\norm{\mathbf{x}_n - \mathbf{x}}^p_{\ell^p} \leq \lim_{k \to \infty} \limsup_{n \to \infty} 2 C_p\norm{\mathbf{x}^{(k)}_n}^p_{\ell^p} = 0.
		\end{align*}  
		This concludes the proof.
	\end{proof}

	\textbf{Acknowledgement.} 
	C. Gu is supported by the National Key R\&D Program of China (No. 2021YFA1002700) and NSFC (No. 12301166). We thank Vincent Bansaye for inspiring discussions. We also thank the referees for careful reading and helpful comments.
	
	\bibliographystyle{abbrv}
	\bibliography{RRTRef}
\end{document}